\patchcmd{\thebibliography}{\section*{\refname}}{}{}{}
\def\cA{\mathcal{A}}
\def\cC{{\mathcal C}}
\def\cCA{{\mathcal C_A}}
\def\cCAloc{{\mathcal C_A^{\text{loc}}}}
\def\Z{{\mathbb Z}}
\def\Q{{\mathbb Q}}
\def\g{{\mathfrak g}}
\def\WL{{\mathbf L}}
\def\AL{{\mathbb L}}
\title{Fusion categories for affine vertex algebras\\ at admissible levels }
\author{Thomas Creutzig\thanks{Department of Mathematical and Statistical Sciences, University of Alberta,
Edmonton, Alberta  T6G 2G1, Canada and\newline Research Institute for Mathematical Sciences, Kyoto University, Kyoto Japan 606-8502.
\newline email: creutzig@ualberta.ca}}
\date{}
\begin{document}

\bibliographystyle{amsalpha}

\theoremstyle{plain}
\newtheorem{introthm}{Main Theorem}
\newtheorem{obs}{Observation}
\newtheorem{thm}{Theorem}[section]
\newtheorem{prop}[thm]{Proposition}
\newtheorem{lem}[thm]{Lemma}
\newtheorem{cor}[thm]{Corollary}
\newtheorem{introcor}{Corollary}
\newtheorem{conj}[thm]{Conjecture}

\theoremstyle{definition}
\newtheorem{defi}[thm]{Definition}
\newtheorem{rem}[thm]{Remark}
\newtheorem{ex}[thm]{Example}

\newcommand{\one}{\mathbf 1}
\newcommand{\Id}{\text{Id}}
\newcommand {\CC}{\mathbb{C}}
\newcommand {\ZZ}{\mathbb{Z}}
\newcommand{\cQ}{\mathcal{Q}}
\newcommand{\cH}{\mathcal{H}}
\newcommand {\tr}{\text{tr}}
\newcommand {\ch}{\text{ch}}
\newcommand {\sch}{\text{sch}}
\newcommand {\sltwo}{\mathfrak{sl}_2}
\newcommand {\cW}{\mathcal{W}}
\newcommand {\cG}{\mathcal{G}}
\newcommand {\cM}{\mathcal{M}}
\newcommand {\cVec}{\text{Vec}}
\newcommand {\cX}{\mathcal{X}}
\newcommand {\cB}{\mathcal{B}}
\newcommand {\cD}{\mathcal{D}}
\newcommand {\cS}{\mathcal{S}}
\newcommand {\cO}{\mathcal{O}}
\newcommand {\cF}{\mathcal{F}}
\newcommand{\cL}{\mathcal{L}}
\newcommand{\cbarH}{\overline{\mathcal{H}}}
\newcommand{\func}[1]{\mathcal F_{#1}}
\newcommand{\HHom}[2]{\text{Hom}_{#1}\left( #2 \right)}
\newcommand {\cCop}{\mathcal{C}^{\text{op}}}
\newcommand {\cCrev}{\mathcal{C}^{\text{rev}}}
\newcommand {\voa}{vertex operator algebra}
\newcommand {\voas}{vertex operator algebras}
\newcommand {\vosa}{vertex operator superalgebra}
\newcommand {\vosas}{vertex operator superalgebras}
\newcommand{\Sing}{M(p)}
\newcommand{\Trip}{W(p)}
\newcommand{\Hom}{\text{Hom}}
\newcommand{\intHom}{\underline{\text{Hom}}}
\newcommand{\End}{\text{End}}
\newcommand{\FP}{\text{FP}}
\newcommand{\obj}{\text{Obj}}
\newcommand{\svec}{\text{sVec}}

\tikzset{ar/.style={<-}}
\tikzset{br/.style={->}}

\newcommand{\hopflink}{{\text{\textmarried}}}

\renewcommand{\baselinestretch}{1.2}

\maketitle

\begin{abstract}
The main result is that the category of ordinary modules of an affine \voa{} of a simply laced Lie algebra at admissible level is rigid and thus a braided fusion category. If the level satisfies a certain coprime property then it is even a modular tensor category. 
In all cases open Hopf links coincide with the corresponding   normalized S-matrix entries of torus one-point functions. 
This is interpreted as a Verlinde formula beyond rational \voas.

 A preparatory Theorem is a convenient formula for the fusion rules of rational principal W-algebras of any type.
\end{abstract}

\newpage


\newpage

\section{Introduction}

Vertex algebras are a rigorous formulation of chiral algebras of two dimensional conformal field theories of physics. They appear in many interesting problems of both mathematics and physics. In our context the relation to braided tensor categories and modular forms is of interest. By now one understands this well in the instance of strongly rational \voas{} \cite{H1, H5}. In this case one has a modular tensor category and Verlinde's formula holds \cite{H5} (conjectured by Verlinde \cite{V}), i.e. normalized Hopf links coincide with the corresponding  normalized S-matrix entries of torus one-point functions. 
The category of a \emph{nice} class of modules of  a \voa{} is in general expected to form a rigid vertex tensor category.
But already proving the existence of a vertex tensor category is rather difficult as one has to verify that quite a few assumptions hold. This theory has been developed by Huang, Lepowsky and Zhang in a series of many papers \cite{HLZ1}-\cite{HLZ8} and in joint work with Huang and Yang all these assumptions have been verified to hold for the category $\cO_\ell(\g)$ of ordinary modules of affine \voas{} $\AL_\ell(\g)$ at admissible level $\ell$ \cite{CHY}. Tomoyuki Arakawa has proven that this category is semi-simple \cite{Ar3} and so it is natural to wonder if this category is even a ribbon or modular tensor category.
In the case of $\g=\sltwo$ we proved that $\cO_\ell(\sltwo)$ is always ribbon and modular if and only if the denominator of $\ell$ is odd \cite{CHY}. The  natural conjecture is then a similar outcome for any $\cO_\ell(\g)$ and indeed:
\begin{introthm}
Let $\g$ be simply-laced and let $\ell$ be an admissible level for $\g$, then the category $\cO_\ell(\g)$ is ribbon.
\end{introthm}
Note that by a ribbon category we mean a rigid, braided, semi-simple tensor category with twist and with only finitely many inequivalent simple objects. The twist in a vertex tensor category is given by the action of $e^{2\pi i L_0}$ with $L_0$ the zero-mode of the Virasoro field. The statement to prove is thus rigidity.
This Theorem is Corollary \ref{cor:ribbon} of the file and it proves Conjecture 1.1 of \cite{CHY} for simply-laced $\g$. The fusion rules are also found as a Corollary of proof, see Corollary \ref{cor:ordinaryfusion}. 
We remark that Corollary 4.2.3 of \cite{FM} is a rigidity statement for $\cO_\ell(\g)$, but it is not explained why the map in their equation (11) is invertible. This  invertibility statement is highly non-trivial as already in the rational case the proof of invertibility of this map was very involved and Yi-Zhi Huang had to first prove and then use Verlinde's formula for it \cite{H1}.

The proof requires to combine insights of three different directions. First of all, we of course need the existence of vertex tensor category structure proven in \cite{CHY}. Secondly, we need a relation of $\AL_\ell(\g)$ to a better understood family of \voas{} and this relation is given by the coset realization of principal W-algebras of simply-laced Lie algebras recently proven in joint work with Arakawa and Linshaw \cite{ACL}. Thirdly we need the theory of vertex algebra extensions developed in collaboration with Kanade and McRae \cite{CKM}. This theory uses that vertex algebra extensions are in one-to-one correspondence to commutative, associative, haploid algebras in the vertex tensor category \cite{FFRS, KO, HKL}.
Combining all these insights we find a fully faithful braided tensor functor from a subcategory of modules of the rational principal W-algebras (rationality is proven in \cite{Ar4}) onto a category that we denote by $\widetilde\cO_\ell(\g)$, this is Theorem \ref{thm:ribbon}. This category inherits the ribbon structure from the subcategory of modules of the rational principal W-algebra and since $\widetilde\cO_\ell(\g)$ and $\cO_\ell(\g)$ differ by the action of certain simple currents the latter is ribbon as well. 

The main Theorem and its proof have quite a few interesting consequences.
One of them is a proof of an admissible level version of a conjecture of Aganagic, Frenkel and Okounkov \cite{AFO} made in the context of the quantum geometric Langlands program, see Remark \ref{rem:AFO}. Combining various coset statements with vertex tensor categories one can prove further statements related to their conjecture and that is work in progress. A second consequence is that one can now study certain \vosas{} that are extensions of affine \voas{} at admissible levels and rational W-algebras. This is done in the instance of $\AL_\ell\left(\mathfrak{osp}(1|2)\right)$ as an extension of $\AL_\ell(\sltwo)$ times a rational Virasoro \voa{} in \cite{CFK, CKLR} and more complicated examples as e.g. $\AL_\ell\left(\mathfrak{osp}(2|2)\right)$ 
and $\AL_1\left(\mathfrak{d}(2, 1; \alpha)\right)$ are feasible future aims. 
The other consequences will now be explained in detail.

\subsection{An ordinary Verlinde's formula}

In order to explain the next result I will explain some well-known background that is quite useful for this work. I assume familiarity with braided tensor categories and refer to the textbook \cite{EGNO} as reference. Let $\cC$ be a braided tensor category with twists and to simplify exposition we assume that it is strict. 
We will denote the tensor bifunctor by $\boxtimes$ and the tensor unit by $\one$. Our field is $\End(\one)=\CC$. The natural families of twists and braidings are denoted by $\theta_\bullet$ and  $c_{\bullet, \bullet}$

The category is called rigid if for each object $M$ in the category, there is a dual
$M^*$ and morphisms $b_M\in\mathrm{Hom}(\one,M\boxtimes M^*)$ (the co-evaluation) and
$d_M\in\mathrm{Hom}(M^*\boxtimes M,\one)$ (the evaluation) such that
\begin{equation}\nonumber
 (\mathrm{Id}_M\boxtimes d_M)\circ(b_M\boxtimes \Id_M)=\Id_M\,,\ \
(d_M\boxtimes\Id_{M^*})\circ(\Id_{M^*}\boxtimes b_M)=\Id_{M^*}.\label{rigid}
\end{equation}
Rigidity implies that there is a trace. Let $f$ in $\End(M)$, then
\[
\text{tr}(f)= d_M\circ c_{M, M^*}\circ\left(\theta_M\boxtimes \Id_{M^*}\right) \circ\left(f\boxtimes \Id_{M^*}\right)\circ b_M\ \in\End(\one) =\CC.
\]
The partial trace is defined similarly. 
 Let $f$ in $\End(M\boxtimes N)$, then
\[
\text{ptr}_L(f)= d_M\circ c_{M, M^*}\circ\left(\theta_M\boxtimes \Id_{M^*}\right) \circ\left(f\boxtimes \Id_{M^*}\right)\circ b_M\ \in\  \End(N).
\]
For us the most important players are the monodromy
\begin{equation}\nonumber
M_{M, N} :=c_{M,N}\circ c_{M,N}  \in  \End(M\boxtimes N)
\end{equation}
and its partial trace and trace the open and closed Hopf links
\begin{equation}\nonumber
\Phi_{M, N} = \mathrm{ptr}_L(M_{M, N})\in \End(N), \qquad
S^\hopflink_{M,N}=
\mathrm{tr}(M_{M, N})\in \CC.
\end{equation}
The use of these lie in the fact \cite{T, BK} that
for any object $N$ of $\cC$, the map 
\[
\Phi_{\,\cdot\,, N}: \mathrm{Obj}(\mathcal C) \rightarrow \mathrm{End}\left(N\right), \qquad M\mapsto \Phi_{M, N}
\]
is a representation of the tensor ring. 
Let us assume that our category $\cC$ is semi-simple so that 
\begin{equation}\nonumber
X \boxtimes Y \cong \bigoplus_{Z \in \text{Sim}(\cC)} N_{X, Y}^{\ \ \ \, Z} \ Z
\end{equation}
where $\text{Sim}(\cC)$  denotes the set of inequivalent simple objects of $\cC$. The collection of numbers $N_{X, Y}^{\ \ \ \, Z}$ for $X, Y, Z$ in $\text{Sim}(\cC)$ are called the fusion rules of $\cC$. We have for any $W$ 
\begin{equation}\nonumber
\Phi_{X, W} \circ \Phi_{Y, W} = \sum_{Z \in \text{Sim}(\cC)} N_{X, Y}^{\ \ \ \, Z} \ \Phi_{Z, W}.
\end{equation}
Let $W$ be simple so that
\begin{equation}\nonumber
\Phi_{X, W} = \frac{S^\hopflink_{X, W}}{S^\hopflink_{\one, W}} \Id_W.
\end{equation}
This identity follows from taking the trace of both sides. 
It follows that 
\begin{equation}\nonumber
\frac{S^\hopflink_{X, W}}{S^\hopflink_{\one, W}}\frac{S^\hopflink_{Y, W}}{S^\hopflink_{\one, W}} \Id_W =   \sum_{Z \in \text{Sim}(\cC)} N_{X, Y}^{\ \ \ \, Z}        \frac{S^\hopflink_{Z, W}}{S^\hopflink_{\one, W}} \Id_W \qquad \text{for all} \ W \ \in \ \text{Sim}(\cC).
\end{equation}
Theorem \ref{thm:hopklinks} says
\begin{introthm}
 Let $\mathfrak g$ be simply laced and let $\ell = -h^\vee +\frac{u}{v}$ be an admissible number for $\g$. Then
open Hopf links $\Phi_{\AL_\ell(\lambda), \AL_\ell(\mu)}$ coincide with character $S^\chi$ in $\cO_\ell(\g)$, i.e.
\[
\Phi_{\AL_\ell(\lambda), \AL_\ell(\mu)} = \frac{S^\hopflink_{\AL_\ell(\lambda), \AL_\ell(\mu)}}{S^\hopflink_{\AL_\ell(0), \AL_\ell(\mu)}} \textup{Id}_{\AL_\ell(\mu)}=\frac{S^\chi_{\AL_\ell(\lambda), \AL_\ell(\mu)}}{S^\chi_{\AL_\ell(0), \AL_\ell(\mu)}}\textup{Id}_{\AL_\ell(\mu)}.
\]
for all simple modules $\AL_\ell(\lambda), \AL_\ell(\mu)$ in $\cO_\ell(\g)$.
\end{introthm}
The character $S^\chi$-matrix is introduced in the main text. 
I call this Theorem a Verlinde formula for ordinary modules. A speculation is that such a formula might hold for ordinary modules of a big class of quasi-lisse \voas{} as they have certain modularity properties \cite{AK}. We remark that also for $C_2$-cofinite \voas{} the open Hopf links seem to be related to the characters $S^\chi$ but there one needs to introduce modified traces \cite{CGan, GRu}.
 We even expect similar behaviour if we go to larger representation categories of $\AL_\ell(\g)$ then just ordinary modules. So far, David Ridout and I, we conjectured a Verlinde formula for relaxed-highest weight modules of $\AL_\ell(\sltwo)$ at admissible level \cite{CR1, CR2} and indeed the conjecture restricted to ordinary modules is true \cite[Cor.\,7.7]{CHY}.

As mentioned before, Yi-Zhi Huang first proved Verlinde's formula for rational \voas{} and then used this result to deduce rigidity \cite{H1, H5}. It might be possible to repeat this analysis for ordinary modules, i.e. first relate a modular $S$-matrix to Hopf links and then use the result to deduce rigidity. 

If the Hopf link matrix is invertible then we can immediately express the fusion rules in terms of the Hopf links, that is
\begin{equation}\nonumber
 N_{X, Y}^{\ \ \ \, Z}   = \sum_{W \in \text{Sim}(\cC)} \frac{S^\hopflink_{X, W}S^\hopflink_{Y, W}\left(S^\hopflink\right)^{-1}_{Z, W}}{S^\hopflink_{\one, W}}.
\end{equation}
This holds if and only if $\cC$ is a modular tensor category. An equivalent formulation of having a modular tensor category is the following. Let 
\begin{equation}\label{eq:W}
W := \bigoplus_{X \in \text{Sim}(\cC)} X
\end{equation}
then $\cC$ is a modular tensor category if and only if the map from the Grothendieck ring $K(\cC)$ to $\End(W)$,
\begin{equation}\label{eq:ringiso}
\Phi_{\, \cdot\, , W} : K(\cC) \rightarrow \End(W), \qquad X \mapsto \Phi_{X, W} 
\end{equation}
is a ring isomorphism. This statement is used to prove 
\begin{introthm}
Let $\mathfrak g$ be simply laced and let $\ell = -h^\vee +\frac{u}{v}$ be an admissible number for $\g$ with $u, v$ positive coprime integers. Let $N$ be the level of the weight lattice $P$ of $\g$ and let $(N, v)=1$. Then $\cO_\ell(\g)$ is a modular tensor category.
\end{introthm}

\subsection{Fusion rules of W-algebras}

Fusion rules of W-algebras are needed for the proof of the first main Theorem. They have been proven in the simply-laced case if a certain coprime property holds for the denominator of the level \cite{FKW, AE}. The proof was based on Verlinde's formula and I take the slightly different point of view that the map \eqref{eq:ringiso} is a ring isomorphism from the Grothendieck ring of the regular W-algebra to the endomorphism ring of the direct sum of all inequivalent simples \eqref{eq:W}.

 It turns out that then fusion rules can be determined in full generality in the sense of the following Theorem. For this let $P_+^m$ ($P_+^{\vee;m}$) be the set of integrable highest-(co)weight modules of $\g$ at level $m\in \mathbb Z_{>0}$, let $h$ ($h^\vee$) be the (dual) Coxeter number of $\g$. Then simple modules of the simple and rational principal W-algebra $W_k(\g)$ are labelled by pairs $(\lambda, \lambda')$ with $\lambda\in P_+^{u-h^\vee}$ and $\lambda'\in P_+^{\vee; v-h}$ where $k=-h^\vee+\frac{u}{v}$ is admissible for $\g$. They are denoted by $\WL_k(\lambda, \lambda')$. This notation is taken from \cite{AE}.
\begin{introthm}
Let $\g$ be a simple Lie algebra
let $k=-h^\vee+\frac{v}{u}$ be an admissible level for $\g$ with $u, v$ positive coprime integers. Let $\ell \in \mathbb Z_{>h^\vee}$ and let $N_{\lambda, \nu}^{\g_\ell, \  \phi}$ the fusion coefficients of  $\AL_{\ell-h^\vee}(\g)$, i.e. they satisfy
\begin{equation}\label{eq:wzwfusion}
\AL_{\ell-h^\vee}(\lambda) \boxtimes \AL_{\ell-h^\vee}(\nu) \cong \bigoplus_{\phi \in P_+^{\ell-h^\vee}} N_{\lambda, \nu}^{\g_\ell  \  \phi}\   \AL_{\ell-h^\vee}(\phi).
\end{equation}
Let $\lambda, \nu \in P_+^{u-h^\vee}$ and $\lambda', \nu' \in P_+^{\vee; v-h}$,
then we have 
\begin{equation}
\begin{split}
\WL_k(\lambda, 0) \boxtimes \WL_k(0, \lambda') &\cong \WL_k(\lambda, \lambda') \\
\WL_{k}(\lambda, 0) \boxtimes \WL_{k}(\nu, 0) &\cong \bigoplus_{\phi \in P_+^{u-h^\vee}} N_{\lambda, \nu}^{\g_u \  \phi} \  \WL_{k}(\phi, 0)
\end{split}
\end{equation}
Moreover $\WL_k(0, \lambda')$ centralizes $\WL_k(\lambda, 0)$ for all $\lambda\in Q$.

Recall that Feigin-Frenkel duality states that $W_k(\g) \cong W_{{}^Lk}({}^L\g)$ for $r^\vee(k+h^\vee)({}^Lk+{}^Lh^\vee)=1$ \textup{\cite{FF, FF2}} (see also \textup{\cite{ACL}})  so that we have 
\[
\WL_{k}(0, \lambda') \boxtimes \WL_{k}(0, \nu') \cong \bigoplus_{\phi' \in {}^LP_+^{q-{}^Lh^\vee}} N_{\lambda', \nu'}^{{}^L\g_v \  \phi'}   \WL_{k}(0, \phi'), \quad  \begin{cases}  q=v &\ \ \text{if} \ (v,r^\vee) =1 \\  q=\frac{v}{r^\vee} &\ \ \text{if} \ (v,r^\vee) =r^\vee \end{cases}
\]
where one identifies coroots with roots of the dual Lie algebra via
${}^L\alpha = \frac{\alpha^\vee}{\sqrt{r^\vee}}$
and correspondingly coweights with weights of the dual Lie algebra. By ${}^LP$ we then mean the weight lattice of the dual Lie algebra ${}^L\g$ and by $r^\vee$ the lacity of $\g$.
\end{introthm}
This is Theorem \ref{thm:Wfusion} together with the first point of Remark \ref{rem:Wfusion}. The fusion rules are completely determined by this Theorem due to associativity and commutativity of fusion. 

That $\WL_k(0, \lambda')$ centralizes $\WL_k(\lambda, 0)$ for all $\lambda\in Q$ (the root lattice) means that the monodromy between these two modules is trivial and this is important for the proof of the first main Theorem as well. 

In \cite{CL1} rationality of subregular $W$-algebras of $\mathfrak{sl}_4$ at certain admissible levels was proven. Moreover it was shown that the Heisenberg coset is isomorphic to a regular $W$-algebra of type $A$. The proof relied on the fusion rules of regular W-algebras of type $A$ and so in that work it could only be done if the coprime condition of \cite{FKW, AE} was satisfied. The exact same proof now works in general and so
\begin{introcor}
Let $n$ be a positive integer, such that $(n+4, n+1)=1$. Let $k=-4+\frac{n+4}{3}$ and let $\ell= -n+\frac{n+4}{n+1}$. Then $W_k(\mathfrak{sl}_4, f_{\text{subregular}})$ is rational and $C_2$-cofinite and let $H$ be its Heisenberg vertex subalgebra, then $\text{Com}(H, W_k(\mathfrak{sl}_4, f_{\text{subregular}})) \cong W_\ell(\mathfrak{sl}_n, f_{\text{regular}})$.
\end{introcor}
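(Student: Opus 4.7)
The plan is to follow the argument of \cite{CL1} essentially verbatim, replacing its appeal to \cite{FKW, AE} by the general fusion-rule Theorem of this paper. The corollary improves \cite{CL1} precisely in that \cite{CL1} required an additional coprime condition to access the fusion rules of $W_\ell(\mathfrak{sl}_n, f_{\text{regular}})$, and this restriction is no longer needed.

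First I would recall the structural setup from \cite{CL1}: one identifies a Heisenberg subalgebra $H$ inside $W_k(\mathfrak{sl}_4, f_{\text{subregular}})$ and, via a free-field realization, produces a candidate for the coset $\text{Com}(H, W_k(\mathfrak{sl}_4, f_{\text{subregular}}))$ as the regular W-algebra $W_\ell(\mathfrak{sl}_n, f_{\text{regular}})$ at the predicted level $\ell = -n+(n+4)/(n+1)$. The hypothesis $(n+4,n+1)=1$ is precisely the coprimality needed for $\ell$ to be admissible for $\mathfrak{sl}_n$, so Arakawa's rationality result applies. A character comparison then yields a conjectural decomposition of $W_k(\mathfrak{sl}_4, f_{\text{subregular}})$ as an extension of $H\otimes W_\ell(\mathfrak{sl}_n, f_{\text{regular}})$. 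To upgrade this to an actual vertex algebra extension, one needs to verify that the collection of summands assembles into a commutative, associative, haploid algebra object in the relevant vertex tensor category, and this is the step at which fusion rules of $W_\ell(\mathfrak{sl}_n, f_{\text{regular}})$ enter. I would feed in the general fusion-rule Theorem of this paper (the formula for $\WL_\ell(\lambda,0)\boxtimes\WL_\ell(\nu,0)$ together with the centralizing statement for $\WL_\ell(0,\lambda')$ with $\lambda\in Q$, which supplies the required trivial-monodromy input) and then invoke the extension theory of \cite{HKL, CKM} to promote the candidate algebra object to a genuine vertex algebra extension.

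Once the extension is established, rationality and $C_2$-cofiniteness of $W_k(\mathfrak{sl}_4, f_{\text{subregular}})$ follow from the standard stability of these properties under extensions by finitely many simple modules of a rational, $C_2$-cofinite vertex algebra, exactly as in \cite{CL1}; the coset description $\text{Com}(H, W_k(\mathfrak{sl}_4, f_{\text{subregular}}))\cong W_\ell(\mathfrak{sl}_n, f_{\text{regular}})$ is then built into the construction. The main task, and essentially the only non-routine work beyond \cite{CL1}, is the bookkeeping step of confirming that the coprime hypothesis there was used only to access \cite{FKW, AE} and nowhere else, so that the general fusion-rule Theorem cleanly removes the restriction. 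I expect no further obstacle.
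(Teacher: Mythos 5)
Your proposal matches the paper's argument: the paper likewise proves this corollary simply by observing that the proof of \cite{CL1} goes through verbatim once the coprime hypothesis needed there to invoke the fusion rules of \cite{FKW, AE} is replaced by the unconditional fusion-rule statement of Theorem \ref{thm:Wfusion} (including the centralizing/trivial-monodromy input), with rationality and $C_2$-cofiniteness then following from the extension theory exactly as in \cite{CL1}. No discrepancy to report.
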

These types of relations between regular and subregular W-algebras of type $A$ is expected to hold in general and the cases of the subregular W-algebras of $\sltwo$ and $\mathfrak{sl}_3$ are already known as well \cite{ALY, ACL2}. Moreover, Andrew Linshaw has found a different method to prove these statements \cite[Thms. 10.4, 10.5 and 10.6]{L}.

\subsection*{Acknowledgements}
I am very grateful to Terry Gannon for many discussions on related issues and to Jinwei Yang, Yi-Zhi Huang, Andrew Linshaw, Tomoyuki Arakawa, Shashank Kanade and Robert McRae for the collaborations on the works that were needed for this paper. I am supported by NSERC $\#$RES0020460.

\section{Vertex algebra extensions}

\subsection{Commutative Algebras}

The key player for studying \voa{} extensions are commutative algebras in the tensor category, see \cite{KO, EGNO}. We denote by $\cA_{\bullet, \bullet, \bullet}$ the associativity constraint in $\cC$.
\begin{defi}\label{def:alg}\cite[Def. 7.8.1]{EGNO}
Let $\cC$ be a braided tensor category. A {\bf algebra} in $\cC$ is an object $A$ in $\cC$ together with a multiplication map
\[
m: A \boxtimes A \rightarrow A
\]
and a unit
\[
u : \one \rightarrow A
\]
such that the multiplication is associative and compatible with left and right multiplication, i.e. the following three diagrams commute:
\begin{equation}
\begin{split}
\xymatrix{
\left(A \boxtimes A \right)\boxtimes A  \ar[rr]^(0.5){ \cA_{A, A, A} }\ar[d]_{m \boxtimes \Id_A}
&& A \boxtimes \left(A\boxtimes A\right)  \ar[d]^{\Id_A \boxtimes m} &
\\
A\boxtimes A \ar[rd]_{m}
&& A\boxtimes A \ar[ld]^{m} &
\\
& A &&
\\
} \\ \\
\xymatrix{
\one \boxtimes A \ar[r]^{\ell_A}\ar[d]_{u\boxtimes \Id_A} & A \ar[d]^{\Id_A} && A\boxtimes \one \ar[r]^{r_A}\ar[d]_{\Id_A \boxtimes u} & A \ar[d]^{\Id_A}
\\  
A\boxtimes A \ar[r]^m & A && A\boxtimes A \ar[r]^m & A
\\
}
\end{split}
\end{equation}
The algebra $A$ is called {\bf haploid} if the dimension of $\Hom_\cC(\one, A)$ is one.
$A$ is {\bf commutative} if the diagram 
\begin{equation}
\xymatrix{
A\boxtimes A \ar[rd]_{m}\ar[rr]^{c_{A, A}}
&& A\boxtimes A \ar[ld]^{m} &
\\
& A &&
\\
}
\end{equation}
commutes.
\end{defi}
There are two natural module categories associated to commutative algebras. 
\begin{defi}
Let $\cC$ be a tensor category and $A$ an algebra in $\cC$. Then the category $\cCA$ has objects $(X, m_X)$ with $X$ an object of $\cC$ and $m_X \in \Hom_\cC(A \boxtimes_\cC X, X)$ a multiplication morphism that is
\begin{enumerate}
\item Associative, i.e. the diagram commutes:
\begin{equation}
\begin{split}
\xymatrix{
A \boxtimes (A\boxtimes X)    \ar[rr]^{  \cA^{-1}_{A, A, X} } \ar[d]_{\Id_A \boxtimes m_X } 
&&  (A \boxtimes A) \boxtimes W  \ar[d]^{m \boxtimes \Id_X}
\\
A\boxtimes W \ar[rd]_{m_X} && A\boxtimes W \ar[ld]^{m_X} \\
& X &
\\
} 
\end{split}
\end{equation}
\item Unit: The composition
\[
X \xrightarrow{\ell_X} \one \boxtimes X \xrightarrow{u \boxtimes \Id_X} A \boxtimes X \xrightarrow{m_X} X
\]
is the identity on $X$.
\end{enumerate}
Morphisms of $\cCA$ are all $\cC$ morphisms $f \in \Hom_\cC(X, Y)$ such that 
\begin{equation}
\begin{split}
\xymatrix{
A \boxtimes  X    \ar[rr]^{  \Id_A \boxtimes f   } \ar[d]_{ m_X } 
&&  A \boxtimes Y  \ar[d]^{m_Y }
\\
X \ar[rr]^{f} && Y \\
} 
\end{split}
\end{equation}
commutes.
\end{defi}
The category $\cCA$ is tensor but usually not braided. 
\begin{defi}
Let $\cC$ be a braided tensor category and $A$ a commutative algebra in $\cC$. Then the category $\cCAloc\subset\cCA$ of local modules is the full tensor subcategory whose objects are local with respect to $A$, i.e.
\begin{equation}
\begin{split}
\xymatrix{
A \boxtimes  X    \ar[rd]_{  m_X }  \ar[rr]^{M_{A, X}} && A\boxtimes X \ar[ld]^{m_X} \\
& X&  \\
} 
\end{split}
\end{equation}
commutes. 
\end{defi}
This is the category of interest as it inherits the structure of a braided tensor category of $\cC$.

\section{Vertex Algebra Extensions}

The very important result of Huang, Kirillov and Lepowsky makes contact of \voas{} with a corresponding categorical notion:
\begin{thm}\label{thm:HKL}\textup{\cite[Thms 3.2 and 3.4]{HKL}}
Let $\mathcal C$ be a vertex tensor category of a \voa{} $V$. Then 
a vertex operator algebra extension $V \subset A$ in the category $\mathcal C$ is equivalent to a commutative associative algebra in the braided tensor category $\mathcal C$ with trivial twist and injective unit. Moreover, the category of local $\mathcal C$-algebra modules $\mathcal C_A^{\text{loc}}$ is isomorphic to the category of modules in $\mathcal C$ for the extended vertex operator algebra $A$.
\end{thm}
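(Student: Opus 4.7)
The plan rests on the HLZ dictionary: for objects $X, Y, Z$ of $\cC$ and a fixed nonzero complex number $z$, the space $\Hom_{\cC}(X \boxtimes Y, Z)$ is canonically isomorphic to the space of $P(z)$-intertwining operators of type $\binom{Z}{X\ Y}$. Under this correspondence the associator encodes associativity of iterated intertwining operators via analytic continuation, the braiding encodes skew-symmetry, the twist $\theta_X$ corresponds to the action of $e^{2\pi i L_0}$, and the monodromy $M_{X, Y}$ corresponds to the monodromy of products of intertwining operators around the origin. This is the bridge I would use in both directions of the equivalence.

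First I would construct, from a VOA extension $V \subset A$ in $\cC$, an algebra structure on $A$. The restriction of the vertex operator $Y_A$ to $A \otimes A$ is a $V$-intertwining operator of type $\binom{A}{A\ A}$, and under HLZ it yields a morphism $m : A \boxtimes A \to A$; the vacuum inclusion $V \hookrightarrow A$ gives the unit $u$. Commutativity and associativity of $(A, m, u)$ as an object of $\cC$ are the direct translations of skew-symmetry and of the Jacobi identity for $Y_A$. Because $A$ is itself a VOA, all of its $L_0$-eigenvalues are integers, so $\theta_A = \Id_A$, and the unit is injective because $V$ embeds into $A$.

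The converse is the delicate direction. Given a commutative associative algebra $(A, m, u)$ in $\cC$ with trivial twist and injective unit, I would transport $m$ through HLZ to obtain an intertwining operator $\mathcal{Y}_A$ of type $\binom{A}{A\ A}$. The trivial twist condition is precisely the monodromy statement that forces $\mathcal{Y}_A$ to take values in $A((z))$, with no logarithmic or fractional powers, so it defines a genuine state-to-field correspondence. Associativity of $m$ then yields weak associativity of $\mathcal{Y}_A$ and commutativity of $m$ yields locality; combined with the unit axiom these imply the Jacobi identity and the creation property. The Virasoro field on $A$ is inherited from $V$ via $u : V \hookrightarrow A$, making $A$ a VOA extension of $V$. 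For the module equivalence, an object $(X, m_X)$ of $\cCA$ corresponds via HLZ to an intertwining operator $\mathcal{Y}_X$ of type $\binom{X}{A\ X}$, and the locality condition defining $\cCAloc$ --- that $m_X$ equalizes the identity and the monodromy $M_{A, X}$ --- is precisely what forces $\mathcal{Y}_X$ to have integer powers of $z$, the condition that $X$ be an untwisted $A$-module.

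The main obstacle, and the technical heart of the argument, is in the second direction: deriving the VOA Jacobi identity from the abstract associativity and commutativity diagrams. This requires careful control of iterated compositions of intertwining operators and of the analytic continuation built into the HLZ associativity isomorphisms, together with a verification that lower truncation holds for the resulting $Y_A$. Once this analytic-to-categorical translation is in place, functoriality of HLZ and compatibility of $\boxtimes_A$ on $\cCAloc$ with the natural tensor product of untwisted $A$-modules complete the module equivalence.
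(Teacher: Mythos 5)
This theorem is imported into the paper as an external result (cited to Huang--Kirillov--Lepowsky, Theorems 3.2 and 3.4) and is given no proof here, so there is nothing internal to compare against. Your sketch is a faithful reconstruction of the actual HKL argument --- the HLZ dictionary between $\Hom_{\cC}(X\boxtimes Y,Z)$ and intertwining operators, trivial twist forcing integer powers, commutativity/associativity of the algebra translating into locality/weak associativity and hence the Jacobi identity, and the locality condition on $\cCA$-modules selecting the untwisted $A$-modules --- and correctly identifies the converse direction as the technical heart.
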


The first main result of \cite{CKM} is that $\mathcal C_A^{\text{loc}}$ and the vertex tensor category of $A$-modules that lie in $\mathcal C$ are equivalent as braided tensor categories:
\begin{thm}\label{thm:induction}\textup{\cite[Thm. 3.65]{CKM}}
Let $V$ be a \voa{} and let $\cC$ be a full vertex tensor category of $V$-modules. Let $A\in \cC$ be a \voa{} extension of $V$.  Then the isomorphisms of categories of Theorems \ref{thm:HKL}  and the category of vertex algebraic $A$-modules is an equivalence of braided tensor categories. 
\end{thm}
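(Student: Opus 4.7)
The plan is to upgrade the isomorphism of abelian categories provided by Theorem \ref{thm:HKL} to an equivalence that intertwines both the tensor products and the braidings. The two braided tensor structures to compare are: on one side, $\cCAloc$ equipped with the categorical relative tensor product $\boxtimes_A$, defined as the coequalizer in $\cC$ of the two natural morphisms $M\boxtimes A\boxtimes N\rightrightarrows M\boxtimes N$ coming from the right $A$-action on $M$ and the left $A$-action on $N$ precomposed with the braiding; and on the other side, the category of vertex algebraic $A$-modules lying in $\cC$, equipped with the $P(z)$-tensor product for $A$-modules constructed from $A$-intertwining operators via the Huang--Lepowsky--Zhang program.

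The core step is to construct, for each pair of local $A$-modules $M,N$, a natural isomorphism $M\boxtimes_A N\cong M\boxtimes_A^{P(z)}N$ in $\cCAloc$. I would achieve this by showing that both objects represent the same functor on $\cCAloc$, namely the functor sending a local $A$-module $W$ to the space of $A$-intertwining operators of type $\binom{W}{M\,N}$. For the categorical side, one checks that $\cCAloc$-morphisms from the coequalizer $M\boxtimes_A N\to W$ correspond precisely to those $V$-intertwining operators $M\times N\to W$ that factor through the $A$-multiplications on both sides, i.e.\ are $A$-intertwining operators. For the vertex algebraic side this is the defining universal property of $\boxtimes_A^{P(z)}$ in the HLZ framework.

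Once the natural tensor product isomorphism is fixed, I would verify compatibility with the associators and unit isomorphisms; this part is essentially formal, since both structures are induced from $\cC$ along the algebra object $A$ and the identifications of $A$-intertwining operators are manifestly compatible with iterated products. The braiding requires more care: the braiding on $\cCAloc$ is the restriction of the $\cC$-braiding (the locality condition being exactly what allows this descent), while the vertex algebraic braiding on $A$-modules is the $P(z)$-braiding of Huang. One invokes the identification of the categorical and $P(z)$-braidings already in hand for the ambient vertex tensor category $\cC$, and then verifies that this identification passes through the coequalizer construction.

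The hardest part will be the technical verification that the categorical coequalizer and the vertex algebraic universal property coincide. This requires an explicit unpacking of how $A$-intertwining operators factor through the algebra multiplication $m\colon A\boxtimes A\to A$, carried out inside the full HLZ machinery of $P(z)$-compatible products, associativity and monodromy. When $\cC$ is not semisimple everything must be done with logarithmic intertwining operators, which substantially increases the bookkeeping of formal variables, parallel transport along paths in $\CC^\times$, and the resulting associativity isomorphisms. Once this identification is established, the upgrade from an isomorphism of categories to an equivalence of braided tensor categories follows.
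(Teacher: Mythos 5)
The paper gives no proof of this statement---it is quoted directly from \cite[Thm.~3.65]{CKM}---so there is no in-paper argument to compare against; your outline does, however, reproduce the strategy of that reference: realize the tensor product on $\cCAloc$ as a coequalizer $M\boxtimes A\boxtimes N\rightrightarrows M\boxtimes N$ in $\cC$, identify it with the vertex-algebraic $P(z)$-tensor product by showing both represent the functor of $A$-intertwining operators, and then check that the associators and the braiding (which descends to $\cCAloc$ precisely because of locality) agree. Do note that the step you defer as ``the hardest part''---matching the coequalizer's universal property with the universal property of the $P(z)$-tensor product through the full HLZ formalism of logarithmic intertwining operators---is essentially the entire content of the cited theorem, so what you have is a correct roadmap rather than a proof.
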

The second main result of \cite{CKM} is that the induction functor is a vertex tensor functor. This is a functor that maps objects of $\mathcal C$ to algebra objects:
\[
\mathcal F: \mathcal C \rightarrow \mathcal C_A, \qquad X\mapsto (A \boxtimes_{\mathcal C} X, m  \boxtimes_{\mathcal C} \Id_X).
\]
\begin{thm}\textup{\cite[Thm. 3.68]{CKM}}
Induction is a vertex tensor functor from the full subcategory of $V$-modules to $\cCA$ with respect to the vertex tensor category structure of these categories constructed by Huang, Lepowsky and Zhang \cite{HLZ8}
\end{thm}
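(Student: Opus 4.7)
The strategy is to equip $\mathcal F$ with natural isomorphisms
\[
J_{X,Y} : \mathcal F(X) \boxtimes_A \mathcal F(Y) \xrightarrow{\sim} \mathcal F(X \boxtimes_\cC Y)
\]
making it a strong monoidal functor, and then to match these with the intertwining-operator comparison maps coming from the HLZ vertex tensor structure on both sides, invoking Theorems \ref{thm:HKL} and \ref{thm:induction}.

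First I would construct $J_{X,Y}$ at the categorical level. By definition of the tensor product in $\cCA$, the object $\mathcal F(X) \boxtimes_A \mathcal F(Y)$ is a coequalizer of the two obvious $A$-actions on $\mathcal F(X) \boxtimes \mathcal F(Y)$. The candidate comparison map is the composite
\[
(A \boxtimes X) \boxtimes (A \boxtimes Y) \longrightarrow (A \boxtimes A) \boxtimes (X \boxtimes Y) \xrightarrow{\ m \boxtimes \Id\ } A \boxtimes (X \boxtimes Y) = \mathcal F(X \boxtimes Y),
\]
where the first arrow uses the associators of $\cC$ and the braiding $c_{X,A}$. Associativity of $m$ shows that this coequalizes the two $A$-actions, so it descends to the required morphism $J_{X,Y}$. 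An inverse is induced by inserting the unit $u : \one \to A$ into the second $A$-slot of $A \boxtimes X \boxtimes Y$; the unit axioms of Definition \ref{def:alg} show that both composites are identities.

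Next I would check the monoidal coherences. Naturality in $X$ and $Y$ is immediate from bifunctoriality of $\boxtimes$ and naturality of $c$. The hexagon for associativity of $(\mathcal F, J)$ reduces, after unwinding braidings and associators of $\cC$, to associativity of $m$ together with the hexagon axiom for the braiding on $\cC$. The unit constraint identifies $\mathcal F(\one) = A \boxtimes \one \cong A$ with the tensor unit of $\cCA$, and the two unit triangles then follow from the unit axiom for $(A,m,u)$.

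Finally I would upgrade the monoidal structure to a vertex tensor structure. Under Theorem \ref{thm:induction}, the categorical tensor product $\boxtimes_A$ on induced modules corresponds to the HLZ $P(z)$-tensor product of the underlying vertex-algebraic $A$-modules. One then exhibits an $A$-intertwining operator of type $\binom{\mathcal F(X \boxtimes_\cC Y)}{\mathcal F(X)\ \mathcal F(Y)}$ by combining the canonical $V$-intertwining operator for $X \boxtimes_\cC Y$ with the vertex operator $Y_A$ acting on the induced modules, and verifies that the morphism it induces out of the $P(z)$-tensor product agrees with $J_{X,Y}$. The universal property of the $P(z)$-tensor product then completes the identification, and the remaining pieces of vertex tensor data (associators, unit, braiding where it exists) are transported across the same equivalence.

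The main obstacle is the last step: showing that the abstract coequalizer defining $\boxtimes_A$ and the HLZ $P(z)$-tensor product of induced $A$-modules coincide in a way that respects all intertwining-operator data. Everything else is essentially formal given the algebra axioms for $A$ together with Theorem \ref{thm:induction}; what requires work is constructing the explicit $A$-intertwining operators from $V$-intertwining operators and checking that the Jacobi identity, convergence, and associativity conditions of HLZ descend consistently through the induction.
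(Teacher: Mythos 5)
The paper does not prove this statement: it is quoted verbatim from \cite[Thm.~3.68]{CKM}, so there is no in-text argument to compare yours against. That said, your outline does follow the strategy of the cited source (and of Kirillov--Ostrik \cite{KO} for the purely categorical part): construct the comparison isomorphisms $J_{X,Y}$ from the multiplication of $A$ and the braiding, check monoidal coherence, and then identify the categorical $\boxtimes_A$ with the HLZ $P(z)$-tensor product of induced modules via explicit intertwining operators. Two remarks. First, the descent of your candidate map to the coequalizer uses \emph{commutativity} of $A$ and not only associativity, since the second $A$-action on $\mathcal F(X)\boxtimes\mathcal F(Y)$ is defined through the braiding; your text only invokes associativity of $m$. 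Second, as you yourself flag, essentially all of the genuine work lies in the final step --- exhibiting the $A$-intertwining operators, proving the required convergence and associativity, and matching the resulting universal object with the coequalizer --- and your sketch names this step without carrying it out. That analytic content is precisely what \cite{CKM} supplies and what this paper imports as a black box, so your proposal should be read as a correct road map rather than a complete proof.
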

In general the induced object is not local and the criterion that guarantees locality is Proposition 2.65 of \cite{CKM}, saying that $\mathcal F(Y) \in \mathcal C_A^{\text{loc}}$ if and only if $M_{A, Y} = \Id_{A \boxtimes_{\mathcal C} Y}$. One says that $Y$ centralizes $A$. 
\begin{rem}\label{rem:cent}
This centralizing property is sometimes very easy to verify in the instance that $\cC$ is a modular tensor category. Let $\mathcal I$ be a set of inequivalent simple objects such that
\[
A =\bigoplus_{X \in \mathcal I} X
\]
is a commutative algebra object in $\cC$.
Let $Y$ be a simple object of $\cC$ such that $X \boxtimes_{\cC}Y$ is simple for all $X \in \mathcal I$. Then 
\[
M_{X, Y} = a \ \Id_{X \boxtimes_{\cC}Y}
\]
for some number $a$ that is easily computed by taking the trace of both sides, so that
\[
a = \frac{S^\hopflink_{X, Y}}{\dim X \dim Y} 
\]
and thus in this instance $Y$ centralizes $A$ if and only if 
\[
S^\hopflink_{X, Y} = \dim X \dim Y \qquad \text{for all} \ X \ \in  \ \mathcal I.
\]
\end{rem}
The following is a generalization of \cite[Thm. 7.4]{CHY}. The argument of proof is the same.
\begin{thm}\label{thm:liftequi}
Let 
$$
V \cong \bigoplus_i W_i \otimes Z_i
$$ be a \voa{} extending the \voa{} $W \otimes Z$ with $W_i \otimes Z_i$ simple $W \otimes Z$ modules and $W_i \neq  W_j, Z_i \neq  Z_j$ for $i\neq j$.
 Let $\cC_W \boxtimes \cC_Z$ be a vertex tensor category of $W\otimes Z$ modules such that $V$ is an object of $\cC_W \boxtimes \cC_Z$. Let $\cD\subset \cC_W$ be a full rigid braided tensor subcategory of $\cC$, such that every simple object of $\cD$ lifts to a simple local $V$-module under the induction functor $\mathcal F$, i.e.
$\mathcal F(X \boxtimes Z)$ is local and simple for all simple objects $X$ in $\cD$. 
Then $\mathcal F(\cD)$ is rigid braided tensor as well and equivalent to $\cD$ as a braided tensor category.
\end{thm}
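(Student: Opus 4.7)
The plan is to exhibit $\mathcal F$, restricted to the image of $\cD$ inside $\cC_W \boxtimes \cC_Z$ (i.e.\ to objects of the form $X \boxtimes Z$ with $X \in \cD$), as a fully faithful braided tensor functor landing in $(\cC_W \boxtimes \cC_Z)^{\text{loc}}_V$; rigidity of the image and equivalence with $\cD$ will then be a formal consequence of rigidity of $\cD$, together with the fact (from Theorems \ref{thm:HKL} and \ref{thm:induction}) that $\mathcal F$ is a braided tensor functor once one restricts to local objects.

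The crucial step is full faithfulness, which I would establish by a Frobenius reciprocity computation. For simple $X_1, X_2 \in \cD$, the adjunction between induction and restriction gives
\[
\Hom_V\bigl(\mathcal F(X_1 \boxtimes Z), \mathcal F(X_2 \boxtimes Z)\bigr) \cong \Hom_{W\otimes Z}\bigl(X_1 \boxtimes Z, \, \mathcal F(X_2 \boxtimes Z)\bigr).
\]
Writing $\mathcal F(X_2 \boxtimes Z) \cong \bigoplus_i (W_i \boxtimes_{\cC_W} X_2) \boxtimes Z_i$ (using that $Z$ is the tensor unit of $\cC_Z$) and invoking the distinctness assumption $Z_i \not\cong Z_j$ for $i \neq j$, only the summand with $Z_i = Z$, which by distinctness of the $W_i$ must be the canonical summand $W \otimes Z \subset V$ with $W_i = W$, contributes. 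Thus
\[
\Hom_V\bigl(\mathcal F(X_1 \boxtimes Z), \mathcal F(X_2 \boxtimes Z)\bigr) \cong \Hom_{\cC_W}(X_1, X_2),
\]
giving full faithfulness on simples and, by additivity, on all semisimple direct sums in $\cD$.

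Next I would check that $\mathcal F(\cD)$ actually lands in the local subcategory: the hypothesis handles simple objects directly, and since $\cD$ is rigid, the dual $X^*$ of a simple $X \in \cD$ is again simple (as $\End_{\cC_W}(X) = \CC$), so $\mathcal F(X^* \boxtimes Z)$ is local as well. Because induction is a braided tensor functor and $(\cC_W \boxtimes \cC_Z)^{\text{loc}}_V$ is closed under tensor products and direct summands, this forces $\mathcal F(\cD) \subset (\cC_W \boxtimes \cC_Z)^{\text{loc}}_V$. Rigidity then transfers: the images $\mathcal F(b_X)$ and $\mathcal F(d_X)$ of the coevaluation and evaluation of $X \in \cD$ serve as duality data pairing $\mathcal F(X)$ with $\mathcal F(X^*)$, and the snake identities in $\mathcal F(\cD)$ follow from those in $\cD$ via full faithfulness and the tensor functor property of $\mathcal F$; the braiding is preserved on the nose since $\mathcal F$ is braided.

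The main obstacle I anticipate is the Frobenius reciprocity collapse: one must use the distinctness hypothesis on \emph{both} families $\{W_i\}$ and $\{Z_i\}$ in an essential way to isolate a single summand, and also verify the implicit identification of the canonical summand $W \otimes Z \hookrightarrow V$ with the index $i$ for which $Z_i \cong Z$. Once this computation is in hand, the remainder of the argument is a formal consequence of induction being a braided tensor functor between vertex tensor categories, in the precise sense of Theorems \ref{thm:HKL} and \ref{thm:induction}.
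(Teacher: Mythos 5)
Your proposal is correct and takes essentially the same route as the paper: the paper's proof is a three-line citation chain (induction is a braided tensor functor on the centralizing subcategory by Theorem 2.67 of \cite{CKM}, fully faithful by Proposition 8.1 of \cite{CG}, and preserves duals by Lemma 1.16 of \cite{KO}), and your Frobenius reciprocity computation --- using that the $Z_i$ are pairwise inequivalent to isolate the vacuum summand $W\otimes Z$ and collapse the Hom space to $\Hom_{\cC_W}(X_1,X_2)$ --- is precisely the content of the cited Proposition 8.1 of \cite{CG}. In effect you have unpacked the references rather than found a different argument, and the remaining steps (locality of the image, transfer of duality data, preservation of the braiding) match the paper's appeal to the induction functor being a braided tensor functor that preserves duals.
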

\begin{proof}
The induction functor is a braided tensor functor by Theorem 2.67 of \cite{CKM}. It is fully faithful by Proposition 8.1 of \cite{CG}. It thus gives a braided equivalence of $\cD$ and $\mathcal F(\cD)$. Moreover the induction functor preserves duals and hence rigidity by Lemma 1.16 of \cite{KO}, see also Proposition 2.77 of \cite{CKM}.
\end{proof}

\section{Ordinary modules for affine VOAs at admissible level}

I use notation and also many statements of the work of Arakawa and van Ekeren \cite{AE}.
Let $\g$ be a simple Lie algebra and let $k$ be a complex number then we denote the simple affine \voa{} of $\g$ at level $k$ by $\AL_k(\g)$. Let $k=-h^\vee+\frac{u}{v}$ for $u, v$ co-prime positive integers, such that $k$ is a admissible number for $\g$. Let $\AL_k(\lambda)$ be the simple quotient of the highest-weight representation of highest-weight $\lambda$ at level $k$. 
For a positive integer $m$, we denote by $P_+^{m}$ be the set of weights such that $\{ \AL_m(\lambda) | \lambda \in P_+^m\}$ gives the complete set of inequivalent simple modules of $\AL_m(\g)$. Then the category of ordinary modules $\cO_k(\g)$ is a semi-simple category  \cite{Ar3} whose simple objects are $\{ \AL_k(\lambda) | \lambda \in P_+^{u-h^\vee}\}$. It is a vertex tensor category:
\begin{thm}  \textup{\cite[Thm. 6.6]{CHY}}
Let $\g$ be a simple Lie algebra and $k$ a admissible number then the category of ordinary modules $\cO_k(\g)$ has a vertex tensor category structure. 
\end{thm}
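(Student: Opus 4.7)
The natural strategy is to invoke the Huang--Lepowsky--Zhang machinery via a reduction theorem (in the spirit of Huang's work on cofiniteness conditions in logarithmic tensor theory): for a category of grading-restricted generalized $V$-modules that is closed under contragredient duals, direct sums and subquotients, and in which every object is $C_1$-cofinite, the long list of HLZ assumptions is automatic, so the category carries a vertex tensor category structure. The plan is therefore to reduce to verifying closure and $C_1$-cofiniteness for $\cO_k(\g)$.

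The closure properties are essentially free. Arakawa's semi-simplicity theorem says $\cO_k(\g)$ is a semi-simple category whose simple objects are $\{\AL_k(\lambda) : \lambda \in P_+^{u-h^\vee}\}$, so closure under subquotients and direct sums is immediate. Closure under contragredient duals is a standard consequence of the Cartan involution on $\hat\g$, which preserves the ordinary property (finite-dimensional $L_0$-eigenspaces and local finiteness for $\g\subset\hat\g$). The substantive input is $C_1$-cofiniteness of every simple ordinary module, and I would establish this as follows. Each $\AL_k(\lambda)$ is a quotient of the Weyl module $V^k(\lambda)$; in $V^k(\lambda)$ everything of positive conformal weight lies in $\g[t]t\cdot V^k(\lambda)+L(-1)V^k(\lambda)\subset C_1(V^k(\lambda))$, so $V^k(\lambda)/C_1(V^k(\lambda))$ is a quotient of the irreducible finite-dimensional top $V_\lambda$, hence finite-dimensional (the admissible weights in $P_+^{u-h^\vee}$ are integral and dominant for $\g$). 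Since $C_1$-cofiniteness passes to quotients, $\AL_k(\lambda)$ is $C_1$-cofinite.

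The main obstacle is verifying that Huang's reduction theorem genuinely applies in this setting, i.e.\ that the ordinary modules at admissible level fit into the "strongly graded" framework and that the convergence-and-extension conclusions derived from $C_1$-cofiniteness are available. Concretely, the steps are: (a) checking the grading restriction and lower-truncation conditions needed to construct the $P(z)$-tensor product of ordinary modules as a quotient inside $\cO_k(\g)$ itself; (b) deducing, from $C_1$-cofiniteness plus the conformal vector, that products and iterates of intertwining operators satisfy systems of ODEs with regular singular points at $0, 1, \infty$; (c) using these ODEs to prove convergence on the appropriate regions and to produce the analytic continuations that give the associativity and commutativity isomorphisms. Step (b) is the heart of the matter: it is the same style of argument as Huang's convergence-and-extension results for $C_2$-cofinite rational VOAs, and must be adapted to the ordinary (not $C_2$-cofinite) setting, relying crucially on local finiteness of the $\g$-action on ordinary modules to control the remainder terms. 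Once these technical conditions are in place, semi-simplicity plus the HLZ construction yields the full vertex tensor category structure on $\cO_k(\g)$.
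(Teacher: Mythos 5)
First, note that the paper does not prove this statement at all: it is quoted verbatim from \cite[Thm.~6.6]{CHY}, so there is no internal proof to compare against, and your sketch has to be judged against the argument actually given in \cite{CHY}. Your overall strategy --- verify closure under contragredients, sums and subquotients, prove $C_1$-cofiniteness of every object via the Weyl module $V^k(\lambda)$ (whose quotient by $C_1$ is a quotient of the finite-dimensional top $V_\lambda$), and then feed this into Huang's reduction of the HLZ hypotheses --- is indeed the strategy of \cite{CHY}, and the $C_1$-cofiniteness argument is essentially correct (modulo the slip that positive-weight vectors of $V^k(\lambda)$ lie in the image of the \emph{negative} modes $\g\otimes t^{-1}\CC[t^{-1}]$, not of $\g[t]t$; it is the negative modes $a_{-n}w$, $n\geq 1$, that land in $C_1$ via $a_{-1}w$ and $L(-1)$).

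The genuine gap is in your step (a), which you state as something "to be checked" but which is the actual crux: you never show that the $P(z)$-tensor product of two ordinary modules is again an object of $\cO_k(\g)$. Huang's reduction theorem, applied to the category of $C_1$-cofinite lower-bounded generalized modules, produces a vertex tensor structure on a category a priori larger than $\cO_k(\g)$; the fusion product of two ordinary modules could in principle be a generalized module with non-semisimple $L_0$ or infinite-dimensional weight spaces. Closing this gap in \cite{CHY} requires showing that the fusion product is a finitely generated lower-bounded generalized module on which $\g$ acts locally finitely, and then invoking Arakawa's theorem not merely for "closure under subquotients" but to conclude that any such module is a finite direct sum of the $\AL_k(\lambda)$, $\lambda\in P_+^{u-h^\vee}$ --- this is where semisimplicity does real work. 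By contrast, your steps (b) and (c) overstate the remaining difficulty: once every object is $C_1$-cofinite and the relevant closure properties hold, the differential equations, convergence, and extension properties are available off the shelf from Huang's cofiniteness paper and do not need to be re-derived or "adapted" to the non-$C_2$-cofinite setting.
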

Ordinary modules are a subcategory of principal admissible modules at level $k$. These have the property that their trace functions (as Jacobi forms) converge in certain regions and can be meromorphically continued to meromorphic Jacobi forms \cite{KW1}. The latter then turn out to close under the modular group, i.e. let $\ch[M](u, \tau)$ be the meromorphic continuation of the principal admissible module $M$ at level $k$, then especially
\[
\ch[M]\left(\frac{u}{\tau}, -\frac{1}{\tau}\right) = \gamma(u) \sum S^\chi_{M, N} \ch[N](u, \tau)
\] 
where the sum is over all inequivalent simple principal admissible level $k$ modules and $\gamma$ is the usual automorphy factor for Jacobi forms. Then for  $\lambda, \mu \in P_+^{u-h^\vee}$ by \cite{KW1, AE}
\begin{equation}\label{eq:Schaffine}
\frac{S^\chi_{\AL_k(\lambda), \AL_k(\mu)}}{S^\chi_{\AL_k(0), \AL_k(\mu)}} = \frac{\chi_\mu(e(-v/u);\lambda)}{\chi_\mu(e(-v/u); 0)}
\end{equation}
with
\begin{equation}\label{eq:chi}
\chi_\mu(e(-v/u);\lambda):= \sum_{w \in W} \epsilon(w) e^{-2\pi i\frac{v}{u}\left(\lambda+\rho, w(\mu+\rho) \right)}, \qquad \qquad 
\end{equation}
and here $W$ denotes the Weyl group of $\g$ and $\rho$ the Weyl vector. 

\section{W-algebras as coset vertex algebras}

$W$-algebras are \voas{} obtained from affine \voas{} via quantum Hamiltonian reduction. One associates to a Lie algebra $\g$ and a nilpotent element $f$ and a complex number $k$ the $W$-algebra of $\g$ corresponding to $f$ at level $k$. We are interested in regular nilpotent elements $f$ and we denote the resulting $W$-algebra by $W^k(\g)$ and its simple quotient by $W_k(\g)$. This \voa{} is often called the regular or principal $W$-algebra of $\g$ at level $k$.
For literature, we refer to \cite{FF,  Ar1, Ar2}.

Tomoyuki Arakawa has proven \cite{Ar4, Ar5} that $W_k(\g)$ is a rational and $C_2$-cofinite \voa{} if the level satisfies the following conditions
\begin{equation}\nonumber
\begin{split}
k+h^\vee = \frac{u}{v}\, \in \, \mathbb Q_{>0}, \ \ (u, v)=1,  \ \ u, v >0  \ \ \text{and} \ \ \begin{cases} u\geq h^\vee, \ \ v \geq h & \ \ (v, r^\vee) =1 \\ 
 u\geq h, \ \ v \geq r^\vee h^\vee & \ \ (v, r^\vee) =r^\vee \end{cases}
\end{split}
\end{equation}
Here $h$ is the Coxeter number, $h^\vee$ the dual Coxeter number and $r^\vee$ is the lacety of $\g$.
We take the notation of \cite{AE} for W-algebra modules, i.e. simple modules are denoted by $\WL_k(\lambda, \mu)$ with $k+h^\vee=\frac{u}{v}$ and $(\lambda, \mu) \in P_+^{u-h^\vee} \times P_+^{\vee, v-h}$. Moreover $\WL_k(\lambda, \mu)\cong \WL_k(\lambda', \mu')$ if and only if $(\lambda', \mu') = (w(\lambda), w(\mu))$ for some $w\in \widetilde W_+$. Here $ \widetilde W_+$ is the subgroup of automorphisms that preserve the coroot basis. If $\g$ is simply-laced, then it is isomorphic to the discriminant of the root lattice $Q$, i.e. $P/Q$, with $P$ the weight lattice. 

It has been a long standing open conjecture that regular W-algebras of simply-laced Lie algebras have a coset realization inside $\AL_k(\g)\otimes \AL_1(\g)$. In joint work with Tomoyuki Arakawa and Andrew Linshaw this problem has been solved in full generality \cite{ACL}. We need the version at admissible level.
This is the main Theorem 3 part (b) of \cite{ACL}. 
Denote by $W_k(\g)$ the simple regular $W$-algebra of $\g$ at level $k$. 

\begin{thm}\label{thm:GKO}\textup{\cite[Main Theorem 3 (b)]{ACL}}
Let $\ell=-h^\vee +\frac{u}{v}$ be admissible with $(u, v)=1$ and let $\g$ be simply-laced then 
\[
\AL_\ell(\mu) \otimes \AL_1(\nu) \cong  \bigoplus_{\substack{\lambda \in P_+^{u+v-h^\vee}\\ \lambda = \mu+\nu \mod Q}} \AL_{\ell+1}(\lambda) \otimes \WL_{k}(\lambda, \mu)
\]
for $\mu \in  P_+^{u-h^\vee}$ and $k=-h^\vee +\frac{u+v}{u}$. 
\end{thm}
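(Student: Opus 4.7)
The plan is to establish Theorem~\ref{thm:GKO} by specialising a generic-level coset realisation to the admissible level $\ell$ and then using semisimplicity to decompose the tensor product completely. At generic level the main input from \cite{ACL} is an isomorphism of universal principal W-algebras with a commutant,
\[
W^k(\g) \;\cong\; \text{Com}\bigl(V^{\ell+1}(\g),\; V^\ell(\g) \otimes V^1(\g)\bigr),
\]
proved via free field / Miura realisations and screening operator techniques, which identify the strong generators of $W^k(\g)$ with explicit normally-ordered polynomials in the currents of the two affine factors. First I would argue that this identification descends, at the admissible value of $\ell$, to an inclusion $W_k(\g) \hookrightarrow \AL_\ell(\g) \otimes \AL_1(\g)$ mapped isomorphically onto $\text{Com}(\AL_{\ell+1}(\g), \AL_\ell(\g)\otimes\AL_1(\g))$. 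The point is that the generators of $W^k$ constructed generically remain well-defined, non-zero, and mutually independent after passing to simple quotients at $\ell = -h^\vee + u/v$, which can be verified by direct analysis of their OPEs at the admissible specialisation.

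Once the coset is identified with $W_k(\g)$, I would decompose each $\AL_\ell(\mu) \otimes \AL_1(\nu)$ as a module over the subalgebra $\AL_{\ell+1}(\g) \otimes W_k(\g)$. Arakawa's semisimplicity of $\cO_\ell(\g)$, together with rationality of $\AL_1(\g)$ and of $W_k(\g)$, guarantees complete reducibility. The $\AL_{\ell+1}(\g)$-summands that appear must themselves lie in $\cO_{\ell+1}(\g)$, hence are labelled by $\lambda \in P_+^{u+v-h^\vee}$, and the grading of the tensor product by the finite group $P/Q$ (coming from the simple-current group of $\AL_1(\g)$) forces the congruence $\lambda \equiv \mu + \nu \pmod Q$ on every nonzero summand. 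To identify the multiplicity spaces with the simple W-algebra modules $\WL_k(\lambda,\mu)$, I would then compare characters: the Kac-Wakimoto formula expresses $\text{ch}\,\AL_\ell(\mu)$ and $\text{ch}\,\AL_{\ell+1}(\lambda)$ as alternating sums over the affine Weyl group, and Arakawa's quantum Hamiltonian reduction sends such sums to characters of simple $W_k(\g)$-modules. Matching $\text{ch}\,\AL_\ell(\mu) \cdot \text{ch}\,\AL_1(\nu)$ against the candidate right-hand side then reduces to a Weyl-Kac-type identity, and rationality of $W_k(\g)$ forces the multiplicity spaces to be simple, pinning down the labelling uniquely as $\WL_k(\lambda,\mu)$ in the notation of \cite{AE}.

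The main obstacle is the lifting step: passing from the generic coset identity to an equality of simple \voas{} at the admissible specialisation. Showing that the natural map from $W^k(\g)$ factors through $W_k(\g)$ and surjects onto the full commutant requires exhibiting a minimal strong generating set of the simple W-algebra inside the coset and then ruling out extra vacuum vectors; this typically involves PBW filtration arguments together with control of the spaces of singular vectors of $\AL_\ell(\g)$ and $\AL_{\ell+1}(\g)$ at admissible level.
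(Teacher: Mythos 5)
This theorem is not proved in the paper at all: it is quoted verbatim as Main Theorem 3(b) of \cite{ACL}, and the present paper uses it as an external input. So there is no internal proof to compare your argument against; what you have written is a sketch of how one might reprove the cited result of Arakawa--Creutzig--Linshaw.

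As such a sketch, your roadmap does track the broad strategy of \cite{ACL} (first a generic-level identification $W^k(\g)\cong\mathrm{Com}(V^{\ell+1}(\g),V^\ell(\g)\otimes V^1(\g))$, then descent to the admissible specialisation and a complete decomposition), but the two places where you defer the work are exactly where the substance lies, and one step as stated would not go through. First, the generic-to-admissible descent is not a matter of checking that generators ``remain well-defined and independent'' in the simple quotients; in \cite{ACL} it is handled by the machinery of deformable families of vertex algebras together with Arakawa's rationality and character results, and establishing both simplicity of the image of $W^k(\g)$ and surjectivity onto the commutant at the special value is the technical heart of that paper. Second, identifying the multiplicity spaces with the specific simple modules $\WL_k(\lambda,\mu)$ by ``matching characters'' is insufficient on its own: distinct non-isomorphic simple $W_k(\g)$-modules at admissible level can have identical characters as functions of $\tau$ alone, which is precisely why \cite{AE} introduce an additional Jacobi variable to separate them. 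A correct identification needs either that refined character data or a direct analysis via the quantum Hamiltonian reduction functor applied to the branching spaces, not just an ordinary $q$-character comparison. The elementary parts of your argument (complete reducibility from \cite{Ar3} and rationality, and the congruence $\lambda\equiv\mu+\nu \bmod Q$ forced by the $\mathfrak h$-weight grading) are fine.
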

Recall that by the main Theorem of \cite{CHY} we have vertex tensor category structure on $\cO_\ell(\g)$ where the simple objects of $\cO_\ell(\g)$ are the $\AL_\ell(\mu)$ for $\mu$ in  $P_+^{u-h^\vee}$. We also have vertex tensor category on $W_k(g)=\WL_k(0, 0)$-modules as this is a strongly rational \voa{} when the level $k$ is admissible \cite{Ar4}. We will use this fact soon, but first we need to explicitly determine fusion rules of the $W$-algebra.

\section{Fusion rules of $W$-algebras}

The aim of this section is to prove 
\begin{thm}\label{thm:Wfusion}
Let $\g$ be a simple Lie algebra
let $k=-h^\vee+\frac{v}{u}$ be an admissible level for $\g$ with $u, v$ positive integers that satisfy $(u, v)=1$. Let $\ell \in \mathbb Z_{>h^\vee}$ and let $N_{\lambda, \nu}^{\g_\ell, \  \phi}$ the fusion coefficients of  $\AL_{\ell-h^\vee}(\g)$, i.e. they satisfy
\begin{equation}\label{eq:wzwfusion}
\AL_{\ell-h^\vee}(\lambda) \boxtimes \AL_{\ell-h^\vee}(\nu) \cong \bigoplus_{\phi \in P_+^{\ell-h^\vee}} N_{\lambda, \nu}^{\g_\ell  \  \phi}\   \AL_{\ell-h^\vee}(\phi).
\end{equation}
Let $\lambda, \nu \in P_+^{u-h^\vee}$ and $\lambda', \nu' \in P_+^{\vee; v-h}$,
then we have 
\begin{equation}\label{eq:fusionW}
\begin{split}
\WL_k(\lambda, 0) \boxtimes \WL_k(0, \lambda') &\cong \WL_k(\lambda, \lambda') \\
\WL_{k}(\lambda, 0) \boxtimes \WL_{k}(\nu, 0) &\cong \bigoplus_{\phi \in P_+^{u-h^\vee}} N_{\lambda, \nu}^{\g_u \  \phi} \  \WL_{k}(\phi, 0)
\end{split}
\end{equation}
Moreover $\WL_k(0, \lambda')$ centralizes $\WL_k(\lambda, 0)$ for all $\lambda\in Q$.
\end{thm}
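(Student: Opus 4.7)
By Arakawa's rationality theorem the W-algebra $W_k(\g)$ is strongly rational at admissible $k$, and hence by Huang's work its module category $\cC$ is a modular tensor category. The map $\Phi_{\cdot,W}\colon K(\cC)\to\End(W)$ of \eqref{eq:ringiso} is therefore a ring isomorphism, and by Huang's rational Verlinde formula the scalar by which $\Phi_{X,Y}$ acts on any simple $Y=\WL_k(\mu,\mu')$ equals the normalized character ratio $S^\chi_{X,Y}/S^\chi_{\one,Y}$. My strategy is to compute these ratios, observe that they factorize into a $\g_u$- and a ${}^L\g_v$-piece, and then read off the fusion rules via this ring isomorphism.

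The first step is to derive from the Kac-Wakimoto character formula, which writes the $W_k(\g)$-character of $\WL_k(\lambda,\lambda')$ as an alternating theta-function quotient depending separately on $\lambda\in P_+^{u-h^\vee}$ and $\lambda'\in P_+^{\vee,v-h}$, the factorization
\[
\frac{S^\chi_{(\lambda,\lambda'),(\mu,\mu')}}{S^\chi_{(0,0),(\mu,\mu')}}=\frac{S^{\g_u,\chi}_{\lambda,\mu}}{S^{\g_u,\chi}_{0,\mu}}\cdot\frac{S^{{}^L\g_v,\chi}_{\lambda',\mu'}}{S^{{}^L\g_v,\chi}_{0,\mu'}},
\]
whose right-hand factors are normalized modular $S$-matrix entries of the rational affine VOAs $\AL_{u-h^\vee}(\g)$ and $\AL_{v-h}({}^L\g)$. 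Because $\Phi_{\cdot,W}$ is a ring isomorphism, the fusion $\WL_k(\lambda,0)\boxtimes\WL_k(\nu,0)$ is the unique class whose image equals $\Phi_{\WL_k(\lambda,0),W}\circ\Phi_{\WL_k(\nu,0),W}$; by the factorization this composition acts on each summand $\WL_k(\mu,\mu')$ of $W$ by a scalar depending only on $\mu$, which exactly matches the eigenvalue pattern of $\AL_{u-h^\vee}(\lambda)\boxtimes\AL_{u-h^\vee}(\nu)$ in the modular tensor category of $\AL_{u-h^\vee}(\g)$. This forces the preimage to be $\sum_\phi N^{\g_u,\phi}_{\lambda,\nu}\WL_k(\phi,0)$, yielding the second line of \eqref{eq:fusionW}. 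The same argument with one left factor and one right factor produces $\WL_k(\lambda,0)\boxtimes\WL_k(0,\lambda')=\WL_k(\lambda,\lambda')$, the first line.

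For the centralizing claim, I would apply the balancing identity $M_{X,Y}=(\theta_{X\boxtimes Y}/(\theta_X\theta_Y))\,\Id$ (valid on simples with simple fusion) to $X=\WL_k(\lambda,0)$ and $Y=\WL_k(0,\lambda')$: trivial monodromy reduces to $h_{\WL_k(\lambda,\lambda')}-h_{\WL_k(\lambda,0)}-h_{\WL_k(0,\lambda')}\in\Z$, which follows from the explicit conformal weight formula for principal W-algebra modules together with the integrality of the dual pairing on $Q\times P^\vee$ when $\lambda\in Q$. The main obstacle is Step~1: the $S$-matrix factorization itself is essentially Kac-Wakimoto, but the derivations of fusion rules in \cite{FKW, AE} needed an additional coprime assumption to apply Verlinde through an explicit inversion of $S$. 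Replacing that inversion by the ring isomorphism \eqref{eq:ringiso} --- available in full generality thanks to Arakawa's rationality theorem --- sidesteps the obstruction, because the factorized eigenvalue pattern on the direct sum $W$ pins down the fusion uniquely in any modular tensor category. The only remaining bookkeeping point is to check that the factorization is compatible with the diagonal $\widetilde W_+$-identification of the labels $(\lambda,\lambda')$.
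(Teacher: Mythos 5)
Your overall strategy --- replacing the explicit inversion of $S$ by the ring isomorphism $\Phi_{\,\cdot\,,W}\colon K(\cC)\to\End(W)$, which is available because $W_k(\g)$ is strongly rational by Arakawa and hence modular by Huang --- is exactly the strategy of the paper. However, your Step 1 contains a genuine gap, and it sits precisely at the point where the coprime hypothesis of \cite{FKW, AE} entered historically. The factorization you assert is false on two counts. First, by \cite[Cor.~8.4]{AE} the W-algebra $S^\chi$-matrix carries a cross-phase $e^{2\pi i\left((\lambda+\rho,\mu'+\rho)+(\lambda'+\rho,\mu+\rho)\right)}$ coupling the two sectors, so the eigenvalue of $\Phi_{\WL_k(\lambda,0),W}$ on the summand $\WL_k(\mu,\mu')$ is $e^{2\pi i(\lambda,\mu'+\rho)}\,\chi_\mu(e(-v/u);\lambda)/\chi_\mu(e(-v/u);0)$, which \emph{does} depend on $\mu'$; your claim that the scalar depends only on $\mu$ is incorrect. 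The paper controls this phase only after multiplying two such eigenvalues, using that $N_{\lambda,\nu}^{\g_u\ \phi}\neq 0$ forces $\lambda+\nu\equiv\phi \bmod Q$, whence $e^{2\pi i(\lambda+\nu,\mu'+\rho)}=e^{2\pi i(\phi,\mu'+\rho)}$; this congruence is an essential input you never invoke.

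Second, and more seriously, the left-hand factor is $\chi_\mu$ evaluated at $e(-v/u)$, whereas the normalized $S$-matrix of the rational VOA $\AL_{u-h^\vee}(\g)$ is $\chi_\mu$ evaluated at $e(-1/u)$. Identifying the two --- or even just asserting that the former satisfies the $\g_u$ Verlinde product relations --- is exactly what needs proof: when $(v,N)=1$ the substitution $e(-1/(Nu))\mapsto e(-v/(Nu))$ is a Galois automorphism and the identification is classical, but for general $v$ it is not, and this is why \cite{FKW, AE} imposed the coprime condition. The paper's actual new ingredient is the observation that one only needs a \emph{surjective ring homomorphism} $\sigma_v\colon S_1^{-1}R_1\to S_v^{-1}R_v$ between localizations of $\Z[e(-1/(Nu))]$ (well defined because the denominators $\chi_\mu(e(-v/u);0)$ are nonzero, which is itself argued from nonvanishing of quantum dimensions in the rigid rational W-algebra category), since a ring homomorphism suffices to transport the product relations from argument $e(-1/u)$ to $e(-v/u)$. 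Your proposal supplies neither the nonvanishing nor the homomorphism, so the ``exact match of eigenvalue patterns'' is unjustified. By contrast, your treatment of the centralizing claim via the balancing identity and conformal weights is a legitimate alternative once the first fusion rule is established, and is equivalent to the paper's direct computation $S^\hopflink_{(\lambda,0),(0,\lambda')}/\bigl(S^\hopflink_{(0,0),(0,\lambda')}S^\hopflink_{(0,0),(\lambda,0)}\bigr)=e^{2\pi i(\lambda,\lambda')}$ combined with Remark \ref{rem:cent}.
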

\begin{rem}\label{rem:Wfusion} Three remarks are in order:
\begin{enumerate}
\item
Recall that Feigin-Frenkel duality states that $W_k(\g) \cong W_{{}^Lk}({}^L\g)$ for $r^\vee(k+h^\vee)({}^Lk+{}^Lh^\vee)=1$ \cite{FF, FF2, ACL}  so that we have 
\[
\WL_{k}(0, \lambda') \boxtimes \WL_{k}(0, \nu') \cong \bigoplus_{\phi' \in {}^LP_+^{q-{}^Lh^\vee}} N_{\lambda', \nu'}^{{}^L\g_q \  \phi'}   \WL_{k}(0, \phi'), \quad  \begin{cases}  q=v &\ \ \text{if} \ (v,r^\vee) =1 \\  q=\frac{v}{r^\vee} &\ \ \text{if} \ (v,r^\vee) =r^\vee \end{cases}
\]
where one identifies coroots with roots of the dual Lie algebra via
${}^L\alpha = \frac{\alpha^\vee}{\sqrt{r^\vee}}$
and correspondingly coweights with weights of the dual Lie algebra. By ${}^LP$ we then mean the weight lattice of the dual Lie algebra ${}^L\g$. 
\item Fusion rules have been known for $\g$ simply-laced, see \cite[Thm. 8.5]{AE} and \cite[Thm. 4.3]{FKW} but with the assumption that $q$ is coprime to the order of the discriminant $P/Q$.
Together with commutativity and associativity the fusion rules stated by us completely determine all fusion rules:
\begin{equation}\nonumber
\begin{split}
 \WL_k(\lambda, \lambda')  \boxtimes  \WL_{k}(\nu, \nu') &\cong \WL_k(\lambda, 0) \boxtimes \WL_k(0, \lambda') \boxtimes \WL_k(\nu, 0) \boxtimes \WL_k(0, \nu') \\
 &\cong  \WL_{k}(\lambda, 0) \boxtimes \WL_{k}(\nu, 0) \boxtimes \WL_{k}(0, \lambda') \boxtimes \WL_{k}(0, \nu')  \\
 &\cong \bigoplus_{\phi \in P_+^{u-h^\vee}} N_{\lambda, \nu}^{\g_u \  \phi}   \WL_{k}(\phi, 0)  \boxtimes \bigoplus_{\phi' \in {}^LP_+^{q-{}^Lh^\vee}} N_{\lambda', \nu'}^{{}^L\g_q \  \phi'}   \WL_{k}(0, \phi') \\
 &\cong \bigoplus_{\substack {\phi \in P_+^{u-h^\vee} \\ \phi' \in {}^LP_+^{q-{}^Lh^\vee}}}  N_{\lambda, \nu}^{\g_u \  \phi}  N_{\lambda', \nu'}^{{}^L\g_q \  \phi'}   \WL_{k}(\phi, \phi').
\end{split}
\end{equation}
\item The fusion rules of \eqref{eq:fusionW} tell us that the map $\AL_{\ell-h^\vee}(\lambda) \mapsto \WL_k(\lambda, 0)$ gives a ring homomorphism
from the tensor ring of the affine \voa{} to the one of the W-algebra. It is not injective if $v-h=0$.
\end{enumerate}
\end{rem}
Recall from the introduction that a modular tensor category has the property that the map 
\begin{equation}\label{eq:hopfgrothendieck1}\nonumber
K(\cC) \rightarrow \text{End}\Big(\bigoplus_{W \in  \text{Sim}(\cC)} W\Big), \qquad X \mapsto \sum_{W \in  \text{Sim}(\cC)} \Phi_{X, W} = \sum_{W \in  \text{Sim}(\cC)}  \frac{S^\hopflink_{X, W}}{S^\hopflink_{0, W}}\Id_{W}
\end{equation}
is a ring isomorphism. Heret the set of inequivalent simple objects of $\cC$ is denoted by $\text{Sim}(\cC)$.  We can thus prove fusion rules by computing all open Hopf links.
The rest of the section is the proof of the Theorem:
\begin{proof}
We continue to take notation and set-up of \cite{AE}. 
Let $\g$ be a simple Lie algebra and
let $k=-h^\vee+\frac{v}{u}$ be an admissible level for $\g$ and let $\lambda, \mu \in P_+^{u-h^\vee}$ and $\lambda', \mu' \in P_+^{\vee; v-h}$. Let $I_+^{(p, q)} = \left( P_+^{u-h^\vee} \times P_+^{\vee; v-h}\right)/ \widetilde W_+$ and 
recall the definition of $\chi_\mu(e(-v/u);\lambda)$ in \eqref{eq:chi}.
Let me take the short hand notation for  this proof
\[
\chi_\mu(e(-v/u);\lambda):= \chi_\mu(\lambda), \quad 
\chi_{\mu'}(e(-u/v);\lambda'):=\chi'_{\mu'}(\lambda')= \sum_{w \in W} \epsilon(w) e^{-2\pi i\frac{u}{v}\left(\lambda'+\rho, w(\mu'+\rho) \right)}
\]
so that the modular $S$-matrix is given by \cite[Cor. 8.4]{AE}
\begin{equation}
\begin{split}
\ch[\WL_k(\lambda, \lambda')]\left(-\frac{1}{\tau}, \tau^{-L_{[0]}}u \right) &= \sum_{\mu, \mu'\in I_+^{p, q}}  S^\chi_{(\lambda, \lambda'),(\mu, \mu')}\ch[\WL_k(\mu, \mu')]\left(\tau, u \right)  \\
S^\chi_{(\lambda, \lambda'),(\mu, \mu')} &= \frac{e^{2\pi i \left((\lambda+\rho, \mu'+\rho)+(\lambda'+\rho, \mu+\rho)\right)}}{(uv)^{}\ell/2 |J|^{1/2}} \chi_\mu(\lambda)\chi'_{\mu'}(\lambda').
\end{split}
\end{equation}
$\ell$ is the rank of $\g$ and $J$ is the groupoid of simple currents. 
We note that \cite{AE} introduced a second variable $u$ in order to distinguish characters that otherwise would be linearly dependent. 
This then confirmed an earlier result \cite{FKW}.
It follows that $\chi_\mu(0)$ and $\chi'_{\mu'}(0)$ are non-zero as the modular $S^\chi$-matrix and the Hopf link $S^\hopflink$ in a rational and $C_2$-cofinite \voa{} are proportional to each other (and non-zero) and the dimension of a simple object in a semi-simple rigid tensor category is always non-zero. We thus have
\[
\frac{S^\chi_{(\lambda, \lambda'),(\mu, \mu')}}{S^\chi_{(0, 0),(\mu, \mu')}} = \frac{e^{2\pi i \left((\lambda', \mu+\rho)+(\lambda, \mu'+\rho)\right)} \chi_\mu(\lambda)\chi'_{\mu'}(\lambda')}{\chi_\mu(0)\chi'_{\mu'}(0)}.
\]
Hence for any $\mu, \mu'$ we have that 
\begin{equation}\nonumber
\begin{split}
\frac{S^\chi_{(\lambda, \lambda'),(\mu, \mu')}}{S^\chi_{(0, 0),(\mu, \mu')}} &= \frac{e^{2\pi i \left((\lambda', \mu+\rho)+(\lambda, \mu'+\rho)\right)} \chi_\mu(\lambda)\chi'_{\mu'}(\lambda')}{\chi_\mu(0)\chi'_{\mu'}(0)}\\
&=  \frac{e^{2\pi i \left((\lambda, \mu'+\rho)+(0, \mu+\rho)\right)} \chi_\mu(\lambda)\chi'_{\mu'}(0)}{\chi_\mu(0)\chi'_{\mu'}(0)}  \frac{e^{2\pi i \left((0, \mu'+\rho)+(\lambda', \mu+\rho)\right)} \chi_\mu(0)\chi'_{\mu'}(\lambda')}{\chi_\mu(0)\chi'_{\mu'}(0)}\\
&= \frac{S^\chi_{(\lambda, 0),(\mu, \mu')}}{S^\chi_{(0, 0),(\mu, \mu')}}\frac{S^\chi_{(0, \lambda'),(\mu, \mu')}}{S^\chi_{(0, 0),(\mu, \mu')}}.
\end{split}
\end{equation}
Since the $W$-algebra is rational \cite{Ar4} so that its module category is modular \cite{H1} the map from the Grothendieck ring to the endomorphism ring of the direct sum of all simples via open Hopf links is an isomorphism and so it follows that
\[
\WL_k(\lambda, 0) \boxtimes \WL_k(0, \lambda') \cong \WL_k(\lambda, \lambda').
\]
We now check the centralizing property.
Using the symmetry that $\chi_\lambda(\mu)=\chi_\mu(\lambda)$ and that for any two objects $X, Y$ of a strongly rational \voa{} one has 
\[
\frac{S^\hopflink_{X,Y}}{S^\hopflink_{\one, Y}} = \frac{S^\chi_{X,Y}}{S^\chi_{\one, Y}}
\]
 we get that
\begin{equation}\nonumber
\begin{split}
\frac{S^\hopflink_{(\lambda, 0),(0, \lambda')}}{S^\hopflink_{(0, 0),(0, \lambda')}S^\hopflink_{(0, 0),(\lambda, 0)}}&= \frac{S^\hopflink_{(\lambda, 0),(0, \lambda')}}{S^\hopflink_{(0, 0),(\lambda, \lambda')}}  
=  \frac{S^\hopflink_{(\lambda, 0),(0, \lambda')}}{S^\hopflink_{(0, 0),(\lambda, \lambda')}}  \frac{S^\hopflink_{(0, 0),(0, \lambda')}}{S^\hopflink_{(0, 0),(0, \lambda')}}   \frac{S^\hopflink_{(0, 0),(0, 0)}}{S^\hopflink_{(0, 0),(0, 0)}} \\
&=  \frac{S^\chi_{(\lambda, 0),(0, \lambda')}}{S^\chi_{(0, 0),(\lambda, \lambda')}}  \frac{S^\chi_{(0, 0),(0, \lambda')}}{S^\chi_{(0, 0),(0, \lambda')}}   \frac{S^\chi_{(0, 0),(0, 0)}}{S^\chi_{(0, 0),(0, 0)}} =
\frac{S^\chi_{(\lambda, 0),(0, \lambda')}}{S^\chi_{(0, 0),(\lambda, \lambda')}}\\
 &= \frac{e^{2\pi i \left((\lambda'+\rho, \lambda+\rho)+(\rho, \rho)\right)} \chi_0(\lambda)\chi'_{\lambda'}(0)}{e^{2\pi i \left((\lambda+\rho, \rho)+(\lambda'+\rho, \rho)\right)}  \chi_\lambda(0)\chi'_{\lambda'}(0)} =  \frac{e^{2\pi i \left((\lambda'+\rho, \lambda+\rho)+(\rho, \rho)\right)}}{e^{2\pi i \left((\lambda+\rho, \rho)+(\lambda'+\rho, \rho)\right)} }\\
&= e^{2\pi i (\lambda, \lambda')}.
\end{split}
\end{equation}
It follows that $ \WL_k(0, \lambda')$ centralizes $\WL_k(\lambda, 0)$ for all $\lambda\in Q$ by Remark \ref{rem:cent}. 

We will now be more explicit on the fusion ring. For this let $r \in \{1, v\}$ and consider the ring $R_r= \Z(e(-r/(Nu)))$ with $N$ the level of the weight lattice $P$, that is the smallest positive integer $N$, such that $NP$ is an integral lattice. For example for $\mathfrak{sl}_n$ we have $n=N$. 
Let $S_r=<\chi_\mu(e(-r/u);0)>$ be the monoid generated by $\chi_\mu(e(-r/u);0)$ so that 
\begin{equation}\nonumber
\begin{split}
\frac{S^\chi_{(\lambda, 0),(\mu, \mu')}}{S^\chi_{(0, 0),(\mu, \mu')}} &= \frac{e^{2\pi i \left((\lambda, \mu'+\rho)\right)} \chi_\mu(\lambda)}{\chi_\mu(0)} \ \in  \ S_v^{-1}R_v.
\end{split}
\end{equation}
Let $\sigma_v$ be the surjective ring homomorphism generated by $e(-1/(Nu))\mapsto e(-v/(Nu))$, i.e.
\begin{equation}\label{eq:galois}
\sigma_v : S_1^{-1}R_1 \rightarrow S_v^{-1}R_v, \qquad e(-1/(Nu))\mapsto e(-v/(Nu)).
\end{equation}
 So that we see that
\[
\sigma_v\left(\frac{\chi_\mu(e(-1/u);\lambda)}{\chi_\mu(e(-1/u);0)} \right) =\frac{\chi_\mu(e(-v/u);\lambda)}{\chi_\mu(e(-v/u);0)}.
\]
The former are the normalized Hopf links of $\AL_{u-h^\vee}(\g)$ and satisfy
\[
\frac{\chi_\mu(e(-1/u);\lambda)}{\chi_\mu(e(-1/u);0)}  \frac{\chi_\mu(e(-1/u);\nu)}{\chi_\mu(e(-1/u);0)} = \sum_{\phi} N_{\lambda, \nu}^{\g_u \ \phi} \frac{\chi_\mu(e(-1/u);\phi)}{\chi_\mu(e(-1/u);0)} 
\]
with $N_{\lambda, \nu}^{\g_u\ \phi}$ the fusion coefficients of  $\AL_{u-h^\vee}(\g)$:
\[
\AL_{u-h^\vee}(\lambda) \boxtimes \AL_{u-h^\vee}(\nu) \cong \bigoplus_{\phi \in P_+^{u-h^\vee}} N_{\lambda, \nu}^{\g_u  \  \phi}   \AL_{u-h^\vee}(\phi).
\]
Note that $N_{\lambda, \nu}^{\g_u \  \phi}\neq  0$ implies that $\lambda+\nu = \phi \mod Q$ since the fusion product is a homomorphic image of the tensor ring of the compact Lie group. It thus follows that
\begin{equation}\nonumber
\begin{split}
\frac{S^\chi_{(\lambda, 0),(\mu, \mu')}}{S^\chi_{(0, 0),(\mu, \mu')}} \frac{S^\chi_{(\nu, 0),(\mu, \mu')}}{S^\chi_{(0, 0),(\mu, \mu')}}  &= e^{2\pi i \left((\lambda+\nu, \mu'+\rho)\right)} \frac{\chi_\mu(\lambda)}{\chi_\mu(0)} \frac{\chi_\mu(\nu)}{\chi_\mu(0)} \\ 
&=e^{2\pi i \left((\lambda+\nu, \mu'+\rho)\right)} \sigma_v\left(\frac{\chi_\mu(e(-1/u);\lambda)}{\chi_\mu(e(-1/u);0)} \right)\sigma_v\left(\frac{\chi_\mu(e(-1/u);\nu)}{\chi_\mu(e(-1/u);0)} \right) \\
&=e^{2\pi i \left((\lambda+\nu, \mu'+\rho)\right)}  \sum_{\phi} N_{\lambda, \nu}^{\g_u  \ \phi} \sigma_v\left(\frac{\chi_\mu(e(-1/u);\phi)}{\chi_\mu(e(-1/u);0)} \right) \\
&=  \sum_{\phi} N_{\lambda, \nu}^{\g_u \ \phi} e^{2\pi i \left((\phi, \mu'+\rho)\right)} \sigma_q\left(\frac{\chi_\mu(e(-1/u);\phi)}{\chi_\mu(e(-1/u);0)} \right) \\
&=  \sum_{\phi} N_{\lambda, \nu}^{\g_u \ \phi} \frac{S^\chi_{(\phi, 0),(\mu, \mu')}}{S^\chi_{(0, 0),(\mu, \mu')}}, 
\end{split}
\end{equation}
and hence
\[
\WL_{k}(\lambda, 0) \boxtimes \WL_{k}(\nu, 0) \cong \bigoplus_{\phi \in P_+^{u-h^\vee}} N_{\lambda, \nu}^{\g_u\ \phi}   \WL_{k}(\phi, 0).
\]
\end{proof}

\section{Fusion categories for ordinary modules}

Let $\g$ be simply-laced. We can now proceed in analogy to section 7 of \cite{CHY}. Firstly, let $\ell+h^\vee=\frac{u}{v}$ and let $\mathcal C_u$ be the category of right modules of $W_k(\g)$ with $k+h^\vee=\frac{u+v}{u}$, i.e. the simple objects are the $\WL_k(0, \mu)$ with $\mu \in P^{u-h^\vee}_+$ and we have just seen that this is a full vertex tensor subcategory for $W_k(\g)$. Moreover, consider $\cO_\ell(\g) \boxtimes \cO_1(\g)$ and let $\widetilde \cO_\ell(g)$ be its subcategory whose simple objects are the $\AL_\ell(\lambda) \otimes \AL_1(\nu)$ with $(\lambda, \nu) \in  P^{u-h^\vee}_+ \times  P^{1}_+$ such that $\lambda+\nu \in Q$. Then
\begin{thm}\label{thm:ribbon}
The categories $\widetilde \cO_\ell(g)$ and $\cC_u$ are equivalent as braided tensor categories with twist. In particular  $\widetilde \cO_\ell(g)$ is rigid and thus it is a ribbon category.
\end{thm}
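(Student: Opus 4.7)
The plan is to realize $V := \AL_\ell(\g) \otimes \AL_1(\g)$ as a simple-current extension of $W_k(\g) \otimes \AL_{\ell+1}(\g)$ and then to apply Theorem \ref{thm:liftequi} to transport the ribbon structure of $\cC_u$ across the induction functor. First, $(u+v,v)=(u,v)=1$, so $\ell+1+h^\vee = (u+v)/v$ is admissible for $\g$, which makes $\cO_{\ell+1}(\g)$ into a vertex tensor category by \cite{CHY}. Specializing Theorem \ref{thm:GKO} to $\mu=\nu=0$ yields
\[
V \cong \bigoplus_{\lambda \in Q \cap P_+^{u+v-h^\vee}} \WL_k(\lambda,0) \otimes \AL_{\ell+1}(\lambda),
\]
so by Theorem \ref{thm:HKL}, $V$ is a haploid commutative associative algebra with injective unit in $\cC := \cC_{W_k(\g)} \boxtimes \cO_{\ell+1}(\g)$, with pairwise distinct summands in both tensor factors.

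Next I would apply Theorem \ref{thm:liftequi} with $W = W_k(\g)$, $Z = \AL_{\ell+1}(\g)$, and $\cD = \cC_u \subset \cC_{W_k(\g)}$; the subcategory $\cC_u$ is rigid because $W_k(\g)$ is rational by \cite{Ar4}. The hypothesis to verify is that for every simple $\WL_k(0,\mu) \in \cC_u$, the induced module $\cF(\WL_k(0,\mu) \boxtimes \AL_{\ell+1}(0))$ is local and simple. For locality, the monodromy with a single summand of $V$ factors as
\[
M_{\WL_k(\lambda,0),\WL_k(0,\mu)} \otimes M_{\AL_{\ell+1}(\lambda),\AL_{\ell+1}(0)};
\]
the right factor is trivial because $\AL_{\ell+1}(0)$ is the unit, and the left factor is $\Id$ for every $\lambda \in Q$ by the centralizing part of Theorem \ref{thm:Wfusion}. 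For simplicity, the W-algebra fusion rule $\WL_k(\lambda,0) \boxtimes \WL_k(0,\mu) \cong \WL_k(\lambda,\mu)$ of Theorem \ref{thm:Wfusion} together with Theorem \ref{thm:GKO} applied to $(\mu,\nu_\mu)$, where $\nu_\mu \in P_+^1$ is the unique representative of the class $-\mu+Q \in P/Q$, gives
\[
\cF\bigl(\WL_k(0,\mu) \boxtimes \AL_{\ell+1}(0)\bigr) \cong \bigoplus_{\lambda \in Q \cap P_+^{u+v-h^\vee}} \WL_k(\lambda,\mu) \otimes \AL_{\ell+1}(\lambda) \cong \AL_\ell(\mu) \otimes \AL_1(\nu_\mu),
\]
which is simple since it is the tensor product of two simple modules over a tensor product VOA.

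By Theorem \ref{thm:liftequi}, $\cF$ then provides a braided tensor equivalence of $\cC_u$ onto its rigid image $\cF(\cC_u)$. The simple objects of $\cF(\cC_u)$ are precisely $\AL_\ell(\mu) \otimes \AL_1(\nu_\mu)$ for $\mu \in P_+^{u-h^\vee}$; since $P_+^1$ is a set of coset representatives for $P/Q$, these are exactly the simples of $\widetilde\cO_\ell(\g)$, and hence $\cF(\cC_u) = \widetilde\cO_\ell(\g)$ as full subcategories of $\cO_\ell(\g) \boxtimes \cO_1(\g)$. The twist agrees because in both categories it is given by $e^{2\pi i L_0}$ and $L_0$ is unchanged by the coset embedding. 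Together with the finite semi-simplicity inherited from $\cC_u$, this yields that $\widetilde\cO_\ell(\g)$ is a ribbon category equivalent to $\cC_u$.

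The hard part is locality of the induction, and this is exactly where the centralizing statement in the last sentence of Theorem \ref{thm:Wfusion} does the essential work; without it the induced modules would not be local and Theorem \ref{thm:liftequi} could not be invoked. A secondary bookkeeping point is to check that the summands of $V$ are pairwise distinct in each tensor factor, which for the affine factor is immediate and for the $W$-algebra factor comes from the labelling conventions of \cite{AE} recalled before Theorem \ref{thm:GKO}.
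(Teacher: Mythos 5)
Your proposal is correct and follows essentially the same route as the paper: realize $\AL_\ell(\g)\otimes\AL_1(\g)$ as an extension of $W_k(\g)\otimes\AL_{\ell+1}(\g)$ via Theorem \ref{thm:GKO}, use the centralizing statement and fusion rules of Theorem \ref{thm:Wfusion} to get locality and to identify the induced modules with $\AL_\ell(\mu)\otimes\AL_1(\nu_\mu)$, and invoke Theorem \ref{thm:liftequi} to transport the ribbon structure. Your explicit verification of the remaining hypotheses of Theorem \ref{thm:liftequi} (simplicity of the induced objects and pairwise distinctness of the summands in each tensor factor) is slightly more careful than the paper's write-up but does not change the argument.
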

\begin{rem}\label{rem:AFO}
Aganagic, Frenkel and Okounkov \cite{AFO} conjecture a braided equivalence between $W$-algebra modules and Weyl modules of the affine \voa{} at generic levels if they satisfy the relation
\[
\beta= \frac{1}{k+h^\vee} +r^\vee N
\]
where $r^\vee$ is the lacety of $\g$ and $\beta$ is the shifted level of the $W$-algebra. The original conjecture (Conjecture 6.3 of \cite{AFO}) is the case $N=0$, but they later say that this should generalize to any integral $N$. 
Our Theorem here is the simply-laced version at admissible level of this at $N=1$ and we see that the statement has to be slightly tweaked as we get an equivalence between 
$\widetilde \cO_\ell(g)$ and $\cC_u$. 
Note that $\beta= \frac{u+v}{u}$ here and $k+h^\vee=\frac{u}{v}$ so this fits. 
\end{rem}
\begin{proof}
Let $V=\AL_{\ell+1}(\g) \otimes W_k(g)$ and let $A=\AL_\ell(\g) \otimes \AL_1(\g)$. Then $V$ has a vertex tensor category structure $\cC_V$ given by the Deligne product of the category of ordinary modules of $\AL_{\ell+1}$ and the modular tensor category of $W_k(\g)$. By Theorem \ref{thm:GKO} the \voa{} $A$ is an object in this category and thus by Theorem \ref{thm:HKL} corresponds to a commutative and associative algebra in $\cC_V$. 
We can view $\cC_u$ as a full tensor subcategory of $\cC_V$ via the embedding
\[
\cC_u \rightarrow \cC_V, \qquad \WL_k(0, \mu)  \mapsto \AL_{\ell+1}(\g) \boxtimes  \WL_k(0, \mu).
\] 
Let $\cF: \cC_V \rightarrow \cCA$ be the induction functor. By Theorem \ref{thm:Wfusion} every object in $\cC_u$ centralizes $A$ and so $\cF$ maps $\cC_u$ to $\cCAloc$. We are thus in the situation of Theorem \ref{thm:liftequi} and the image of $\AL_{\ell+1}(\g) \boxtimes  \WL_k(0, \mu)$ under induction is
\begin{equation}\nonumber
\begin{split}
\cF(\AL_{\ell+1}(\g) \boxtimes  \WL_k(0, \mu)) &\cong_V A\boxtimes_V  \AL_{\ell+1}(\g) \boxtimes  \WL_k(0, \mu) \\
&\cong_V\bigoplus_{\lambda \in P_+^{u+v-h^\vee}\cap Q} \left(\AL_{\ell+1}(\lambda) \otimes \WL_{k}(\lambda, 0)\right)\boxtimes_V  \left(\AL_{\ell+1}(\g) \boxtimes  \WL_k(0, \mu)\right) \\ 
&\cong_V \bigoplus_{\lambda \in P_+^{u+v-h^\vee}\cap Q} \AL_{\ell+1}(\lambda) \otimes \WL_{k}(\lambda, \mu)\\
&\cong_V \AL_\ell(\mu) \otimes \AL_1(\nu) \ \in  \ \widetilde \cO_\ell(\g).
\end{split}
\end{equation}
Here we used our fusion rules as well as  Theorem \ref{thm:GKO} for the identification with $\AL_\ell(\mu) \otimes \AL_1(\nu)\in  \widetilde \cO_\ell(\g)$.
The weight $\nu$ is the unique one of  $\AL_1(\g)$ such that $\mu+\nu\in Q$. 
The claim is now proven due to Theorem \ref{thm:liftequi}.
\end{proof}
This Theorem has a series of nice corollaries. 
\begin{cor}\label{cor:ribbon}
The category $\cO_\ell(\g)$ is ribbon.
\end{cor}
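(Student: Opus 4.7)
The plan is to deduce rigidity of $\cO_\ell(\g)$ from the already-established rigidity of $\widetilde\cO_\ell(\g)$ in Theorem \ref{thm:ribbon}. The category $\cO_\ell(\g)$ is semisimple with finitely many simples by \cite{Ar3, CHY}, already carries a braiding from its vertex tensor category structure, and comes equipped with the twist $\theta_M = e^{2\pi i L_0}$. Hence the only outstanding property to verify for a ribbon structure in the sense of the introduction is rigidity of every simple $\AL_\ell(\lambda)$.

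The key observation is that $\widetilde\cO_\ell(\g)$ and $\cO_\ell(\g)$ differ only by the action of simple currents coming from $\cO_1(\g)$. Because $\g$ is simply-laced, $\AL_1(\g)$ is strongly rational and its modular tensor category $\cO_1(\g)$ is pointed with fusion group $P/Q$; in particular every $\AL_1(\nu)$ for $\nu\in P_+^1$ is an invertible object, and $P_+^1$ is a complete set of coset representatives of $P/Q$. For each $\lambda\in P_+^{u-h^\vee}$ I pick the unique $\nu(\lambda)\in P_+^1$ with $\lambda+\nu(\lambda)\in Q$. Then $\AL_\ell(\lambda)\boxtimes \AL_1(\nu(\lambda))$ is a simple object of $\widetilde\cO_\ell(\g)$ and hence rigid; since $\widetilde\cO_\ell(\g)$ is a full braided tensor subcategory of $\cO_\ell(\g)\boxtimes\cO_1(\g)$, it remains rigid inside the Deligne product.

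Next I tensor this object with the invertible $\one\boxtimes\AL_1(\nu(\lambda))^\vee$, whose rigidity is supplied by the modular tensor category $\cO_1(\g)$, and compute $(\AL_\ell(\lambda)\boxtimes\AL_1(\nu(\lambda)))\otimes(\one\boxtimes\AL_1(\nu(\lambda))^\vee) \cong \AL_\ell(\lambda)\boxtimes\AL_1(0)$. As a tensor product of two rigid objects this is rigid in $\cO_\ell(\g)\boxtimes\cO_1(\g)$. To descend rigidity to $\cO_\ell(\g)$ itself, I use that the embedding $\cO_\ell(\g)\hookrightarrow\cO_\ell(\g)\boxtimes\cO_1(\g)$, $M\mapsto M\boxtimes\AL_1(0)$, is a fully faithful braided tensor functor that reflects rigidity: by the Deligne-product decomposition $\Hom(X_1\boxtimes Y_1,X_2\boxtimes Y_2)=\Hom_{\cO_\ell(\g)}(X_1,X_2)\otimes_\CC \Hom_{\cO_1(\g)}(Y_1,Y_2)$ on simple factors, any evaluation and coevaluation witnessing a dual of $\AL_\ell(\lambda)\boxtimes\AL_1(0)$ must be supported on summands of the form $X\boxtimes\AL_1(0)$, so the dual can be taken in this form and the corresponding $X$ is the required dual of $\AL_\ell(\lambda)$ in $\cO_\ell(\g)$.

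The step I expect to be the main obstacle is this last descent. It is morally a standard property of Deligne products with a rigid second factor, but in the present vertex-algebraic setting one should confirm that the ambient product $\cO_\ell(\g)\boxtimes\cO_1(\g)$ genuinely admits the structure used above (with the Hom-space decomposition and the embedding being a tensor functor at the level of vertex tensor categories), rather than only as an abstract Deligne product. Once rigidity of every simple $\AL_\ell(\lambda)$ is in hand, the pre-existing semisimplicity, braiding and twist on $\cO_\ell(\g)$ immediately promote it to a ribbon category.
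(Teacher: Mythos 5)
Your proposal is correct and follows essentially the same route as the paper: both write $\AL_\ell(\lambda)\boxtimes\AL_1(0)$ as the tensor product of the rigid object $\AL_\ell(\lambda)\boxtimes\AL_1(\lambda^*)$ of $\widetilde\cO_\ell(\g)$ (rigid by Theorem \ref{thm:ribbon}) with the rigid simple current $\AL_\ell(0)\boxtimes\AL_1(\lambda)$ coming from $\cO_1(\g)$, and conclude rigidity of the product. Your final descent step (using the Hom-space decomposition of the Deligne product to transport the dual back into $\cO_\ell(\g)$ itself) is a valid and slightly more careful elaboration of a point the paper leaves implicit.
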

\begin{proof}
The category $\cO_\ell(\g)$ is a subcategory of $\cO_\ell(\g) \boxtimes \cO_1(\g)$ under the identification of $\AL_\ell(\mu)$ with $ \AL_\ell(\mu) \otimes \AL_1(0)$.
The $\AL_1(\nu)$ are all rigid simple currents with fusion rules $\AL_1(\nu) \boxtimes \AL_1(\nu') \cong \AL_1(\nu+\nu')$ and $\nu, \nu'\in P/Q$ and via the embedding $\cO_1(\g) \rightarrow \cO_\ell(\g) \boxtimes \cO_1(\g)$ the objects $\AL_\ell(0) \otimes \AL_1(\mu)$ are rigid simple currents as well. The inverse object of $\AL_\ell(0) \otimes \AL_1(\mu)$ is the dual module denoted by $\AL_\ell(0) \otimes \AL_1(\mu^*)$.
It follows that
\[
\AL_\ell(\mu) \otimes \AL_1(0) \cong \left(\AL_\ell(\mu) \otimes \AL_1(\mu^*)\right) \boxtimes \left(\AL_\ell(0) \otimes \AL_1(\mu)\right)
\]
and hence $\AL_\ell(\mu) \otimes \AL_1(0)$ is rigid as tensor product of two rigid modules is rigid.
\end{proof}
\begin{cor}\label{cor:ordinaryfusion}
The fusion rules in $\cO_\ell(\g)$ are
\[
\AL_{\ell}(\lambda) \boxtimes \AL_{\ell}(\nu) \cong \bigoplus_{\phi \in P_+^{u-h^\vee}} N_{\lambda, \nu}^{\g_u  \  \phi}   \AL_{\ell}(\phi)
\]
with $N_{\lambda, \nu}^{\g_u \  \phi}$ the fusion rules of $\AL_{u-h^\vee}(\g)$, see \eqref{eq:wzwfusion}. 
\end{cor}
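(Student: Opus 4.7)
The plan is to transport the fusion rules established for the W-algebra in Theorem \ref{thm:Wfusion} across the braided equivalence $\widetilde\cO_\ell(\g)\cong\cC_u$ of Theorem \ref{thm:ribbon}, and then strip off the auxiliary $\AL_1(\g)$-factor using that the objects of $\cO_1(\g)$ are rigid simple currents.

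I would start from the explicit identification used in the proof of Theorem \ref{thm:ribbon}: under the induction equivalence, the simple object $\WL_k(0,\mu)\in\cC_u$ corresponds to $\AL_\ell(\mu)\otimes\AL_1(\nu_\mu)\in\widetilde\cO_\ell(\g)$, where $\nu_\mu\in P_+^1$ is the unique weight with $\mu+\nu_\mu\in Q$. Since $\g$ is simply-laced so that ${}^L\g=\g$ and $r^\vee=1$, applying Theorem \ref{thm:Wfusion} combined with Feigin-Frenkel duality in the form of Remark \ref{rem:Wfusion} to $W_k(\g)$ (with $k+h^\vee=(u+v)/u$) yields
\[
\WL_k(0,\lambda)\boxtimes\WL_k(0,\mu)\cong\bigoplus_{\phi\in P_+^{u-h^\vee}} N_{\lambda,\mu}^{\g_u\ \phi}\,\WL_k(0,\phi).
\]
Transporting this identity across the equivalence gives in $\widetilde\cO_\ell(\g)$
\[
\bigl(\AL_\ell(\lambda)\otimes\AL_1(\nu_\lambda)\bigr)\boxtimes\bigl(\AL_\ell(\mu)\otimes\AL_1(\nu_\mu)\bigr)\cong\bigoplus_{\phi} N_{\lambda,\mu}^{\g_u\ \phi}\,\AL_\ell(\phi)\otimes\AL_1(\nu_\phi).
\]

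The same left-hand side can be computed directly in the Deligne product $\cO_\ell(\g)\boxtimes\cO_1(\g)$ as $\bigl(\AL_\ell(\lambda)\boxtimes\AL_\ell(\mu)\bigr)\otimes\bigl(\AL_1(\nu_\lambda)\boxtimes\AL_1(\nu_\mu)\bigr)$. The $Q$-grading of affine fusion forces every $\phi$ occurring with nonzero multiplicity on the right to satisfy $\phi\equiv\lambda+\mu\mod Q$, whence $\AL_1(\nu_\phi)\cong\AL_1(\nu_\lambda)\boxtimes\AL_1(\nu_\mu)$ uniformly. Canceling this common rigid simple-current factor from both sides, exactly as in the proof of Corollary \ref{cor:ribbon}, produces the stated fusion rule in $\cO_\ell(\g)$. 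The main subtlety is that $\cO_\ell(\g)$ is not literally a full subcategory of $\widetilde\cO_\ell(\g)$; this is bridged precisely by the simple-current cancellation just described, so I expect no further obstacles.
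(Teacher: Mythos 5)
Your proposal is correct and follows essentially the same route as the paper: the paper's (one-sentence) proof likewise combines the tensor equivalence of Theorem \ref{thm:ribbon}, the fusion rules of $\cC_u$ coming from Theorem \ref{thm:Wfusion} and Remark \ref{rem:Wfusion}, the simple-current fusion $\AL_1(\nu)\boxtimes\AL_1(\nu')\cong\AL_1(\nu+\nu')$, and the vanishing of $N_{\lambda,\nu}^{\g_u\ \phi}$ unless $\phi\equiv\lambda+\nu \bmod Q$. You have merely written out explicitly the cancellation of the $\AL_1$-factor that the paper declares ``clear.''
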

\begin{proof}
This is clear, since the induction functor is a tensor functor and since the fusion rules of $\cC_u$ have been determined in Theorem \ref{thm:Wfusion} and since $\AL_1(\nu) \boxtimes \AL_1(\nu') \cong \AL_1(\nu+\nu')$ for $\nu, \nu'\in P/Q$ together with $N_{\lambda, \nu}^{\g_u \  \phi}=0$ unless $\lambda+\nu=\phi\mod Q$. 
\end{proof}

\section{Hopf links $S^\hopflink$ coincide with character $S^\chi$}

We now compare the open Hopf links of admissible affine \voa{} to modular $S^\chi$. For this let $\mathfrak g$ be simply laced and let $\ell = -h^\vee +\frac{u}{v}$ be an admissible number. 
\begin{thm}\label{thm:hopklinks}
Open Hopf links coincide with normalized  character $S^\chi$ in $\cO_\ell(\g)$, i.e.
\[
\frac{S^\hopflink_{\AL_\ell(\lambda), \AL_\ell(\mu)}}{S^\hopflink_{\AL_\ell(0), \AL_\ell(\mu)}}=\frac{S^\chi_{\AL_\ell(\lambda), \AL_\ell(\mu)}}{S^\chi_{\AL_\ell(0), \AL_\ell(\mu)}}= \frac{\chi_\mu(e(-v/u);\lambda)}{\chi_\mu(e(-v/u); 0)}
\]
for all $\lambda, \mu \in P^{u-h^\vee}_+$.
\end{thm}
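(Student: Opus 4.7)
The plan is to leverage the braided tensor equivalence of Theorem \ref{thm:ribbon} between $\widetilde\cO_\ell(\g)$ and $\cC_u$, combined with the fact that Hopf links and normalized modular $S$-matrix entries coincide in strongly rational vertex operator algebras by Huang's Verlinde formula \cite{H1}. Both $W_k(\g)$ (with $k+h^\vee=(u+v)/u$) and $\AL_1(\g)$ are rational, so the computation of Hopf links on the $\cC_u$-side reduces to manipulations of explicit character formulas.

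First I would use the Deligne product structure of $\cO_\ell(\g)\boxtimes\cO_1(\g)$: the braiding and twist on a product category are products of those on the factors, so Hopf links factorize as
\[
S^\hopflink_{\AL_\ell(\lambda)\otimes \AL_1(\nu_\lambda),\,\AL_\ell(\mu)\otimes\AL_1(\nu_\mu)} = S^\hopflink_{\AL_\ell(\lambda),\AL_\ell(\mu)}\cdot S^\hopflink_{\AL_1(\nu_\lambda),\AL_1(\nu_\mu)},
\]
where $\nu_\lambda,\nu_\mu\in P/Q$ are the unique classes with $\lambda+\nu_\lambda,\mu+\nu_\mu\in Q$. Since $\cF:\cC_u\to\widetilde\cO_\ell(\g)$ is a braided equivalence, it preserves Hopf links; and since $\AL_{\ell+1}(\g)$ is the tensor unit in $\cO_{\ell+1}(\g)$, Deligne factorization on the W-algebra side collapses the first factor. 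Transporting the identification $\cF(\AL_{\ell+1}(\g)\boxtimes\WL_k(0,\mu))\cong\AL_\ell(\mu)\otimes\AL_1(\nu_\mu)$ from the proof of Theorem \ref{thm:ribbon} yields
\[
S^\hopflink_{\AL_\ell(\lambda),\AL_\ell(\mu)}\cdot S^\hopflink_{\AL_1(\nu_\lambda),\AL_1(\nu_\mu)} = S^\hopflink_{\WL_k(0,\lambda),\WL_k(0,\mu)}.
\]
Normalizing by the $\lambda=0$ version reduces the target ratio on the left to a product of two normalized Hopf-link ratios in rational theories, each of which equals the corresponding $S^\chi$ ratio by Huang's theorem.

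For the $W_k(\g)$ factor I plug in the explicit $S^\chi$ formula recorded in the proof of Theorem \ref{thm:Wfusion} (applied with numerator $u+v$ and denominator $u$); for the $\AL_1(\g)$ factor I use the standard level-one lattice formula $S^\chi_{[\mu],[\nu]}=|P/Q|^{-1/2}e^{-2\pi i(\mu,\nu)}$. The key arithmetic identity is
\[
\frac{u+v}{u} \;=\; 1 + \frac{v}{u},
\]
which splits the $W_k(\g)$-exponent into a $\frac{v}{u}$-piece that produces exactly $\chi_\mu(e(-v/u);\lambda)$ and an additional integer-coefficient phase $e^{-2\pi i(\lambda+\rho,w(\mu+\rho))}$. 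The plan is to show that after dividing by the $\lambda=0$ normalization and combining with the compensating level-one phase $e^{2\pi i(\nu_\lambda,\nu_\mu)}\equiv e^{2\pi i(\lambda,\mu)}\pmod{\Z}$, this extra phase is independent of the Weyl group element $w$ and cancels cleanly between numerator and denominator, leaving precisely $\chi_\mu(e(-v/u);\lambda)/\chi_\mu(e(-v/u);0)$.

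The main obstacle is precisely this last combinatorial cancellation in Step~4: the extra factor $e^{-2\pi i(\lambda+\rho,w(\mu+\rho))}$ appearing in the Weyl sum depends on $w$ through $(\lambda,w(\mu+\rho))$, so it does not trivially factor out. One must use that $\lambda\in P$ together with the fact that $\nu_\lambda\equiv -\lambda\pmod{Q}$ to rewrite $(\lambda,w(\mu+\rho))=(w^{-1}\lambda,\mu+\rho)$ modulo integers and absorb the $w$-dependence into the level-$N$ Galois framework already introduced in the proof of Theorem \ref{thm:Wfusion} (with $\sigma_v$ acting on $R_1$). Once the Galois/Weyl-invariance argument reduces the compound phase to $e^{2\pi i(\lambda,\mu+\rho)}$ times a $w$-independent factor that cancels against the $\mu'=0$ denominator, the stated equality follows. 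As a sanity check, the resulting formula then automatically gives a ring homomorphism from $K(\cO_\ell(\g))$ to $\CC$, consistent with the fusion rules of Corollary \ref{cor:ordinaryfusion}.
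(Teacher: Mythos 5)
Your proposal follows essentially the same route as the paper's proof: factorize the normalized Hopf link of $\WL_k(0,\lambda)$ through the induction functor (equivalently the braided equivalence of Theorem \ref{thm:ribbon}) into the $\AL_\ell(\g)$ and $\AL_1(\g)$ contributions, apply Huang's Verlinde formula on the rational $W_k(\g)$ and $\AL_1(\g)$ sides, and use $\frac{u+v}{u}=1+\frac{v}{u}$ together with the fact that $w(\mu+\rho)-(\mu+\rho)\in Q$ pairs integrally with $P$ for simply-laced $\g$ to pull the extra phase out of the Weyl sum and cancel it against the level-one lattice phase. The cancellation you flag as the main obstacle is handled in the paper exactly by this Weyl-invariance-modulo-$\Z$ observation (no Galois argument is needed there), so your plan is correct and matches the paper's argument.
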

\begin{proof}
Theorem 2.89 of \cite{CKM} tells us that induction preserves Hopf links, so that the Hopf links of $W_k(\g)$ with $k=-h^\vee +\frac{u+v}{u}$ satisfy
\begin{equation}\label{eq:hopfchi1}
\frac{S^\hopflink_{(0, \lambda),(0, \mu)}}{S^\hopflink_{(0, 0),(0, \mu)}} =  \frac{S^\hopflink_{\AL_1(\lambda^*), \AL_1(\mu^*)}}{S^\hopflink_{\AL_1(0), \AL_1(\mu^*)}}\frac{S^\hopflink_{\AL_\ell(\lambda), \AL_\ell(\mu)}}{S^\hopflink_{\AL_\ell(0), \AL_\ell(\mu)}}
\end{equation}
The Hopf links of interest of $W_k(\mathfrak g)$ are given by Verlinde's formula \cite{H1} via character's $S^\chi$ by 
\begin{equation}\nonumber
\begin{split}
\frac{S^\hopflink_{(0, \lambda),(0, \mu)}}{S^\hopflink_{(0, 0),(0, 0)}} &= \frac{S^\hopflink_{(0, \lambda),(0, \mu)}}{S^\hopflink_{(0, 0),(0, \mu)}} \frac{S^\hopflink_{(0, 0),(0, \mu)}}{S^\hopflink_{(0, 0),(0, 0)}} 
= \frac{S^\chi_{(0, \lambda),(0, \mu)}}{S^\chi_{(0, 0),(0, \mu)}} \frac{S^\chi_{(0, 0),(0, \mu)}}{S^\chi_{(0, 0),(0, 0)}}
=\frac{S^\chi_{(0, \lambda),(0, \mu)}} {S^\chi_{(0, 0),(0, 0)}}\\
&= e^{2\pi i(\lambda+\mu, \rho) }\frac{\chi_\mu(e(-(u+v)/u);\lambda)}{\chi_0(e(-(u+v)/u); 0)}
= e^{-2\pi i ((\mu, \lambda)+\rho^2)}  \frac{\chi_\mu(e(-v/u);\lambda)}{\chi_0(e(-v/u); 0)}
\end{split}
\end{equation}
where for the last equality we used that Weyl reflections change weights by an element in the root lattice and since 
\[
e^{-2\pi i (\mu+\rho, \lambda+\rho)} = e^{-2\pi i (w(\mu+\rho), \lambda+\rho)}
\]
for any $w\in W$ we can take this factor out of the sum. It follows that
\begin{equation}\nonumber
\frac{S^\hopflink_{(0, \lambda),(0, \mu)}}{S^\hopflink_{(0, 0),(0, \mu)}} = \frac{S^\hopflink_{(0, \lambda),(0, \mu)}}{S^\hopflink_{(0, 0),(0, 0)}} \frac{S^\hopflink_{(0, 0),(0, 0)}}{S^\hopflink_{(0, 0),(0, \mu)}} = e^{-2\pi i (\mu, \lambda)}  \frac{\chi_\mu(e(-v/u);\lambda)}{\chi_\mu(e(-v/u); 0)}
\end{equation}
but this coincides with the normalized character $S$-matrix of the affine \voa{} at level $\ell$ times the corresponding lattice \voa{} module, i.e.
\begin{equation}\label{eq:hopfchi2}
\frac{S^\hopflink_{(0, \lambda),(0, \mu)}}{S^\hopflink_{(0, 0),(0, \mu)}} =  \frac{S^\chi_{\AL_1(\lambda^*), \AL_1(\mu^*)}}{S^\chi_{\AL_1(0), \AL_1(\mu^*)}}\frac{S^\chi_{\AL_\ell(\lambda), \AL_\ell(\mu)}}{S^\chi_{\AL_\ell(0), \AL_\ell(\mu)}}.
\end{equation}
By Verlinde's formula \cite{H1}  we have 
\[
 \frac{S^\chi_{\AL_1(\lambda^*), \AL_1(\mu^*)}}{S^\chi_{\AL_1(0), \AL_1(\mu^*)}}= \frac{S^\hopflink_{\AL_1(\lambda^*), \AL_1(\mu^*)}}{S^\hopflink_{\AL_1(0), \AL_1(\mu^*)}}
\]
and since all $\AL_1(\mu)$ are simple currents in a ribbon category the normalized Hopf links must be roots of unity of order dividing the order of the simple current. They are especially all non-zero 
so that the claim follows by comparing \eqref{eq:hopfchi1} and \eqref{eq:hopfchi2}.
\end{proof}
Recall that a ribbon category $\cC$ is a modular tensor category if and only if the map 
\begin{equation}\label{eq:hopfgrothendieck}
K(\cC) \rightarrow \text{End}\Big(\bigoplus_{W \in  \text{Sim}(\cC)} W\Big), \qquad X \mapsto \sum_{W \in  \text{Sim}(\cC)} \Phi_{X, W} = \sum_{W \in  \text{Sim}(\cC)}  \frac{S^\hopflink_{X, W}}{S^\hopflink_{0, W}}\Id_{W}
\end{equation}
is a ring isomorphism. Recall also that the set of inequivalent simple objects of $\cC$ is denoted by $\text{Sim}(\cC)$.  
\begin{cor}\label{cor:modular}
With the same notation as in the previous Theorem. Let $N$ be the level of the weight lattice $P$ of $\g$ and let $(N, v)=1$. Then $\cO_\ell(\g)$ is a modular tensor category.
\end{cor}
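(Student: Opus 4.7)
The plan is to reduce modularity of $\cO_\ell(\g)$ to non-degeneracy of its normalized open Hopf link matrix, and then to identify that matrix, via Theorem~\ref{thm:hopklinks}, with a Galois conjugate of the $S^\chi$-matrix of the strongly rational \voa{} $\AL_{u-h^\vee}(\g)$, whose invertibility is classical.

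By Corollary~\ref{cor:ribbon} the category $\cO_\ell(\g)$ is a finite semi-simple ribbon category with simples $\{\AL_\ell(\lambda)\mid\lambda\in P_+^{u-h^\vee}\}$. Modularity is equivalent to the map \eqref{eq:hopfgrothendieck} being an isomorphism, and since source and target are $\C$-algebras of the same finite dimension it suffices to prove that the normalized Hopf link matrix
\[
\left(\frac{S^\hopflink_{\AL_\ell(\lambda),\AL_\ell(\mu)}}{S^\hopflink_{\AL_\ell(0),\AL_\ell(\mu)}}\right)_{\lambda,\mu\in P_+^{u-h^\vee}}
\]
is invertible. By Theorem~\ref{thm:hopklinks} this matrix coincides entrywise with $\bigl(\chi_\mu(e(-v/u);\lambda)/\chi_\mu(e(-v/u);0)\bigr)_{\lambda,\mu}$, so the problem reduces to proving invertibility of the latter.

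Next I will invoke the Galois homomorphism $\sigma_v$ of \eqref{eq:galois}: by construction it carries $\chi_\mu(e(-1/u);\lambda)/\chi_\mu(e(-1/u);0)$ to $\chi_\mu(e(-v/u);\lambda)/\chi_\mu(e(-v/u);0)$. The source quantities are precisely the normalized $S^\chi$-matrix entries of the strongly rational \voa{} $\AL_{u-h^\vee}(\g)$, and by Huang's Verlinde formula applied to that rational \voa{} they assemble into an invertible matrix. Under the coprime hypotheses $(u,v)=1$ and $(N,v)=1$ we have $(v,Nu)=1$, so $e(-v/(Nu))$ is again a primitive $Nu$-th root of unity and $\sigma_v$ extends to a Galois automorphism of the cyclotomic field $\Q(e(1/(Nu)))$; in particular its restriction to the subring $S_1^{-1}R_1$ in which the relevant characters live is injective.

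Applying this automorphism entrywise to the invertible matrix associated to $\AL_{u-h^\vee}(\g)$ therefore produces another invertible matrix, which by the identification above is exactly the normalized Hopf link matrix of $\cO_\ell(\g)$. This gives the required isomorphism \eqref{eq:hopfgrothendieck} and hence modularity. The step where the hypothesis enters, and the only delicate point, is the passage from $\sigma_v$ being a surjective ring homomorphism of localized rings of algebraic numbers (where non-zero determinants could in principle collapse) to an honest field automorphism that preserves invertibility; it is precisely the assumption $(N,v)=1$ that accomplishes this promotion.
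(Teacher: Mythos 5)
Your proposal is correct and follows essentially the same route as the paper: both reduce modularity to invertibility of the normalized Hopf link matrix (equivalently, the map \eqref{eq:hopfgrothendieck} being a ring isomorphism), identify that matrix via Theorem~\ref{thm:hopklinks} with the image under $\sigma_v$ of the corresponding data for the modular tensor category $\cO_{u-h^\vee}(\g)$, and use the hypothesis $(N,v)=1$ to promote $\sigma_v$ to a Galois automorphism of $\Q(e(-1/(Nu)))$ so that invertibility is preserved. Your remark spelling out why the localized-ring homomorphism must be upgraded to a field automorphism is a slightly more explicit account of exactly the point where the paper invokes the coprimality assumption.
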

\begin{proof}
If $(N, v)=1$ then the homomorphism $\sigma_v$ of \eqref{eq:galois} induces a Galois automorphism on $\Q(e(-1/(Nu)))$. 
We have just proven that
\[
\frac{S^\hopflink_{\AL_\ell(\lambda), \AL_\ell(\mu)}}{S^\hopflink_{\AL_\ell(0), \AL_\ell(\mu)}}= \frac{\chi_\mu(e(-v/u);\lambda)}{\chi_\mu(e(-v/u); 0)}
\]
so that we obviously have 
\[
\sigma_v\left(\frac{S^\hopflink_{\AL_{u-h^\vee}(\lambda), \AL_{u-h^\vee}(\mu)}}{S^\hopflink_{\AL_{u-h^\vee}(0), \AL_{u-h^\vee}(\mu)}}\right)    =\frac{S^\hopflink_{\AL_\ell(\lambda), \AL_\ell(\mu)}}{S^\hopflink_{\AL_\ell(0), \AL_\ell(\mu)}}
\]
for all $\lambda, \nu$ in $P_+^{u-h^\vee}$. The map \eqref{eq:hopfgrothendieck} is a ring isomorphism for the modular tensor category $\cC=\cO_{u-h^\vee}(\g)$ so that this map is also a ring isomorphism for the ribbon category $\cC=\cO_\ell(\g)$.
\end{proof}

\section{References}

\end{document}